\newcommand{\Lie}{\text{Lie}}
\newcommand{\ric}{\text{ric}}
\newcommand{\Spin}{\text{Spin}}
\newcommand{\fp}{\mathfrak{p}}
\newcommand{\M}{\mathcal{M}}
\newcommand{\dt}[0]{\frac{d}{dt}}
\numberwithin{equation}{section}
\tikzset{every picture/.style={line width=1}}
\pgfmathsetmacro{\ptsize}{0.04}   
\theoremstyle{definition}
\newtheorem{thm}{Theorem}[section]
\newtheorem{prop}[thm]{Proposition}
\newtheorem{main}{Theorem}
\newtheorem{lem}[thm]{Lemma}
\theoremstyle{definition}
\newcommand{\tg}{\tilde{g}}
\newcommand{\RR}{\mathbb{R}}
\newcommand{\HH}{\mathbb{H}}
\newcommand{\CCPP}{\mathbb{CP}} 	  
\newcommand{\Sp}{\text{Sp}}
\newcommand{\f}[1]{\mathfrak{#1}}
\renewcommand{\bar}[1]{\overline{#1}}
\renewcommand{\tilde}[1]{\widetilde{#1}}
\renewcommand{\phi}{\varphi}
\newcommand{\rvline}{\hspace*{-\arraycolsep}\vline\hspace*{-\arraycolsep}}
\let \Re \relax
\DeclareMathOperator{\Re}{Re}
\DeclareMathOperator{\Id}{Id}
\DeclareMathOperator{\1}{\textbf{1}}
\DeclareMathOperator{\Ad}{Ad}
\DeclareMathOperator{\fu}{\f{u}}
\DeclareMathOperator{\fg}{\f{g}}
\DeclareMathOperator{\fh}{\f{h}}
\DeclareMathOperator{\fsp}{\f{sp}}
\DeclareMathOperator{\uSO}{\textup{SO}}
\DeclareMathOperator{\uU}{\textup{U}}
\newcommand{\Rm}{\text{Rm}}
\newcommand{\Ric}{\text{Ric}}
\newcommand{\sst}[0]{\;:\;}
\begin{document}
	\title{On the Ricci flow of homogeneous metrics on spheres}
	\author{Sammy Sbiti}
	\date{\today}
		\begin{abstract}	
		We study the Ricci flow of the four-parameter family of $\Sp(n+1)$-invariant metrics on  $S^{4n+3}\subset\mathbb{R}^{4n+4}$. We determine their forward behaviour and also classify ancient solutions. In doing so, we exhibit a new one-parameter family of ancient solutions on spheres. 
	\end{abstract}
	\maketitle

Hamilton's Ricci flow is given by the geometric PDE
$$
	\frac{\partial}{\partial t}g_t=-2\Ric(g_t).
$$
Similar to the heat equation, the Ricci flow has regularizing properties for Riemannian metrics, making it useful for proving classification-type theorems in geometry. It was used, for example, to prove that simply connected $3$-manifolds with positive Ricci curvature are spheres \cite{hamilton1982three}, as well as simply connected $n$-manifolds with positive curvature operators \cite{bohm2008manifolds} and those with quarter-pinched metrics \cite{brendle2009manifolds}. The Ricci flow with surgery was used in Perelman's celebrated proof of the Poincar\'{e} conjecture, and was also used in recent work by S. Brendle to classify manifolds with positive isotropic curvature in dimensions $n\ge 12$ \cite{brendle2019ricci}. 
 The normalized Ricci flow preserves volume and can be obtained from the Ricci flow by a rescaling and reparametrization in time (see Section 1).
On a compact homogeneous space $M^n=G/H$ the Ricci flow is equivalent to a system of ODE's. In particular, the backwards flow is always well-posed. Moreover, the normalized flow is the gradient flow for the scalar curvature functional on the space of $G$-invariant volume-1 metrics on $M$, and the critical points are Einstein metrics (see, e.g. \cite{wang1986existence}). The global behaviour of the scalar curvature functional was studied in \cite{bohm2004variational}, where it was shown that it satisfies a certain compactness condition, which in particular implies that the moduli space of $G$-invariant Einstein metrics is compact. 
C. B\"{o}hm proved that if $G/H$ is compact and not a torus, then the Ricci flow develops a Type-1 singularity in finite time \cite{bohm2015long}. Moreover, under some mild conditions, a sequence of parabolic rescalings coverges to $K/H\times\mathbb{R}^{n-k}$, for some intermediate subgroup $H\le K\le G$, where $K/H$ is endowed with an Einstein metric and $\RR^{n-k}$ is flat \cite{bohm2015long}.

The goal of this article is to describe the Ricci flow for homogeneous metrics on spheres and to classify their ancient solutions. Besides the left-invariant metrics on $S^3$, homogeneous metrics on spheres can be described in terms of the Hopf fibrations \cite{ziller1982homogeneous} \begin{equation}
S^1\to S^{2n+1}\to \mathbb{CP}^n \qquad
S^7\to S^{15}\to S^8\qquad
S^3\to S^{4n+3}\to \mathbb{HP}^n \label{Hopf}.
\end{equation}

Associated to each fibration, there is a $2$-parameter family of homogeneous metrics $g_{t,s}$ which can be obtained by starting with the round metric and scaling the fiber by $t$ and the base by $s$. The behaviour of the Ricci flow for these metrics was studied in \cite{buzano2014ricci} and \cite{bakas2012ancient}, and we indicate their behaviour in Section 1. See also \cite{isenberg1992ricci} and \cite{cao2008backward} for the case of left-invariant metrics on $S^3$.
There exists, however, a larger class of homogeneous metrics associated to the fibration $S^3\to S^{4n+3}\to\mathbb{HP}^n$ by allowing the metric on $S^3$ to be an arbitrary left-invariant metric. It can be shown that, up to isometry, this left-invariant metric can be diagonalized with eigenvalues $x\le y\le z$. Hence, we obtain a $4$-parameter family of metrics, which we denote by $g_{x,y,z,s}$. This is the only family of homogeneous metrics on spheres for which the Ricci flow has not yet been studied, and is the main object of this paper.

In \cite{ziller1982homogeneous}, it was shown that the only $\Sp(n+1)$-invariant Einstein metrics on $S^{4n+3}$ are, up to scaling, the round metric, $g_{\text{rd}}$, and Jensen's second Einstein metric, $g_{E_2}=g_{x,s}$ where $(x,s)=(1,2n+3)$ \cite{jensen1973einstein}. Hence, the normalized Ricci flow has two fixed points, or so-called nodes. For the normalized flow, a solution either converges to an Einstein metric or the scalar curvature goes to infinity. It follows from a theorem in \cite{bohm2015long} that in the latter case a suitable blow-up converges to an isometric product $S^3\times \mathbb{R}^{4n}$ where $S^3$ and $\mathbb{R}^{4n}$ are endowed with the round metric and flat metric, respectively. We give an elementary proof and in addition exhibit some qualitative behaviours for its solutions.
\begin{main}Jensen's second Einstein metric has a two-dimensional stable manifold for the normalized Ricci flow, which separates the space of unit-volume $\Sp(n+1)$-invariant metrics into two connected, invariant components. Any solution in the first component converges to the round metric in infinite time, and for any solution in the second component, a suitable blow-up converges to $S^3\times \mathbb{R}^{4n}$ where $S^3$ and $\mathbb{R}^{4n}$ are endowed with the round metric and the flat metric, respectively.
\end{main}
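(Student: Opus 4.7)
\emph{Plan.} After fixing volume, the normalized Ricci flow descends to a smooth ODE on the $3$-dimensional moduli space $\cM_1$ of unit-volume $\Sp(n+1)$-invariant metrics on $S^{4n+3}$, coordinatized (up to the volume constraint) by $(x,y,z,s)$ with $0<x\le y\le z$ and $s>0$. This flow is the negative gradient flow of the scalar curvature functional $S|_{\cM_1}$ with respect to the natural $L^2$ metric, so there are no periodic orbits, and every $\alpha$- or $\omega$-limit of a bounded trajectory is contained in the critical set. By \cite{ziller1982homogeneous} and \cite{jensen1973einstein} the critical set is exactly $\{g_{\mathrm{rd}},g_{E_2}\}$.

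First I would linearize at the two equilibria. At $g_{\mathrm{rd}}$ the Hessian of $S|_{\cM_1}$ is negative definite (a direct Lichnerowicz-type computation, or the fact that $g_{\mathrm{rd}}$ is the unique $\Sp(n+1)$-invariant strict maximum of $S$ on unit-volume metrics), so $g_{\mathrm{rd}}$ is a hyperbolic sink. At $g_{E_2}$, a direct Hessian computation in the $(x,y,z,s)$ parameters subject to $\vol=1$ (equivalently the equivariant spectral analysis of \cite{bohm2004variational}) exhibits signature $(1,2)$: the single unstable direction is the $\Sp(n+1)\Sp(1)$-invariant fiber/base direction already visible in the $2$-parameter subfamily of \cite{buzano2014ricci}, while the two $\Sp(1)$-symmetry-breaking directions $y-x$ and $z-y$ on the fiber are stable. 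The stable manifold theorem then provides smoothly immersed local stable and unstable manifolds $W^s_{\mathrm{loc}}(g_{E_2})$ and $W^u_{\mathrm{loc}}(g_{E_2})$ of dimensions $2$ and $1$.

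To globalize I would argue that along $W^s(g_{E_2})$ the scalar curvature is backward-monotone and bounded above by $S(g_{\mathrm{rd}})$; together with the compactness of sublevel sets of $S|_{\cM_1}$ from \cite{bohm2004variational}, this keeps backward trajectories on $W^s_{\mathrm{loc}}$ in a compact subset of $\cM_1$, so $W^s(g_{E_2})$ is a properly embedded $2$-submanifold. Since $\cM_1$ is topologically an open $3$-cell, a properly embedded codimension-$1$ submanifold separates it into at most two components; the two branches of $W^u(g_{E_2})$ lie in distinct components, forcing exactly two. In the component containing $g_{\mathrm{rd}}$, every trajectory is bounded and its $\omega$-limit lies in $\{g_{\mathrm{rd}},g_{E_2}\}\setminus W^s(g_{E_2})=\{g_{\mathrm{rd}}\}$, giving infinite-time convergence to the round metric. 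In the other component no equilibrium is available, so each forward trajectory exits every compact subset of $\cM_1$.

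Finally, to identify the blow-up limit for solutions in the second component I would invoke \cite{bohm2015long}: an appropriate parabolic rescaling subconverges to $K/H\times \RR^{n-k}$ for an intermediate $\Sp(n)\le H\le K\le \Sp(n+1)$, and the only possibility consistent with the $\Sp(n+1)$-action and the observed collapse is $K/H=S^3$ with its round metric, yielding the limit $S^3\times\RR^{4n}$. Alternatively an elementary ODE analysis shows that in this component $y/x,z/x\to 1$ (the fiber rounds out) while $s/x\to\infty$ (the base stretches), and rescaling so the $S^3$-fiber has unit size realizes the limit directly. \emph{The main obstacle} is the global separation step: the stable manifold theorem is only local, and upgrading $W^s(g_{E_2})$ to a properly embedded separating hypersurface requires combining the gradient monotonicity of $S$, the variational compactness of \cite{bohm2004variational}, and a topological Jordan--Brouwer argument in $\cM_1$.
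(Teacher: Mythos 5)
Your overall skeleton (gradient-flow structure, linearization at the two Einstein metrics, a separation argument, identification of the blow-up) matches the paper, and your linearization claim at $g_{E_2}$ is exactly what the paper verifies by direct computation in Lemma \ref{local-behaviour}: unstable direction tangent to $x=y=z$, stable plane tangent to $x+y+z=\mathrm{const}$. The blow-up identification via \cite{bohm2015long} is also a legitimate shortcut that the paper explicitly acknowledges (it instead gives an elementary argument via monotonicity of $x/z,\,y/z$ and a Cheeger--Gromoll splitting). However, the step you yourself flag as the main obstacle --- upgrading $W^s(g_{E_2})$ to a properly embedded separating hypersurface --- is carried out with claims that are false. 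First, \cite{bohm2004variational} does not give compactness of sublevel (or superlevel) sets of $S$ on $\M^G_1$; it gives compactness of the set of Einstein metrics and the Palais--Smale condition on $\{S\ge\epsilon\}$, and there are divergent sequences in $\M^G_1$ with $S$ bounded (indeed with $S\to 0$). Second, backward trajectories in $W^s(g_{E_2})$ do \emph{not} stay in a compact subset of $\M^G_1$: apart from the two $\Sp(n+1)\Sp(1)$-invariant orbits, the stable manifold consists precisely of the collapsing ancient solutions of Section 4 (see Figure \ref{Scal1}), which diverge with $S\to 0$ as $t\to-\infty$. So your stated route to proper embeddedness collapses, and with it the Jordan--Brouwer step as written. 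Third, your assignment of dynamics to the two components assumes that every trajectory in the component of $g_{\mathrm{rd}}$ is bounded; separation by an invariant hypersurface alone does not exclude that such a trajectory diverges (with $S\to\infty$) inside its component, so this is essentially the statement to be proved, not a consequence.

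The paper's separation argument avoids these issues and is worth contrasting with yours: it first shows (Theorem \ref{thm: finite extinction}, via B\"ohm's Type-1 result and a blow-up estimate) that every normalized solution either converges to an Einstein metric or has $S\to\infty$ in finite time; then, in Theorem \ref{separate}, it shows that the basin of $g_{\mathrm{rd}}$ is open (attractor) and that the set of metrics with $S\to\infty$ is open (compactness of the Einstein set bounds Einstein scalar curvatures by some $S_0$, continuous dependence on initial data, and Palais--Smale showing that once $S$ exceeds $S_0$ the solution must blow up), and finally argues that along any path joining the two sets the first parameter with infinite existence time yields a metric whose flow converges to an Einstein metric other than $g_{\mathrm{rd}}$, hence lies in $W^s(g_{E_2})$. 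If you want to keep your Jordan--Brouwer strategy, you would need a genuinely different properness argument for $W^s(g_{E_2})$ (its closure meets infinity of $\M^G_1$, not a compact set), plus an argument pinning down the asymptotics on each side; the paper's openness-plus-path-crossing argument is the cleaner repair.
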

 Furthermore, along the flow $x,y,z\to 0$, and if we assume $x\le y\le z$ then $x/z$ and $y/z$ converge monotonically to $1$.
 
Recall that a solution of the Ricci flow $g_t$ is called ancient if it exists for $t\in(-\infty,0]$. Ancient solutions arise by blowing up near a singular time by a parabolic sequence of rescalings, so understanding them is crucial for singularity analysis. For some examples of ancient solutions in dimensions $n\ge 3$, see e.g. \cite{bohm2019optimal,lauret2013ricci,lu2017ancient,cao2008backward,fateev1996sigma,perelman2002entropy,brendle2017gluing}. As we will see, a solution of the Ricci flow on a compact homogeneous space is ancient if and only if the corresponding solution for the normalized flow is also ancient. We classify the ancient $\Sp(n+1)$-invariant solutions of the Ricci flow on $S^{4n+3}$ and exhibit a new one-parameter family.
\begin{main}
	Let $g_t$ be an $\Sp(n+1)$-invariant solution of the Ricci flow or the normalized Ricci flow on $S^{4n+3}$ with initial condition $g_0=g_{x,y,z,s}$, where $x\le y\le z$. Then $g_t$ is ancient if and only if  $x\le y=z\le s$.
\end{main}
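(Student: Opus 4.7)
The plan is to work throughout with the normalized flow, since on a compact homogeneous space ancient existence is preserved under the time-rescaling between the two flows. The phase space of unit-volume $\Sp(n+1)$-invariant metrics is three-dimensional, and the normalized flow is the gradient flow of scalar curvature, whose only critical points are $g_{\mathrm{rd}}$ and $g_{E_2}$. The main structural input is that $g_{x,y,z,s}$ acquires an extra $U(1)$ isometry when $y = z$, so the Ricci tensor is symmetric under $y \leftrightarrow z$; consequently, the hyperplane $\{y = z\}$ is invariant under the flow, and the ordering $x \le y$ is preserved there.

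For the sufficient direction, assume $x \le y = z \le s$. I would restrict to the flow-invariant slice $\{y = z\}$, reducing the problem to the three-parameter family $g_{x,y,y,s}$ and, after normalizing volume, to a two-dimensional ODE in the ratios $x/y$ and $y/s$. Within this reduced system I would show that the region $\{x \le y \le s\}$ is backward-invariant: direct computation with the Ricci formulas gives monotonicity of $x/y$ and $y/s$ along the flow, and together with the gradient-flow structure this forces the backward orbit to stay in a compact subset of the (appropriately compactified) phase space, with $\alpha$-limit contained in the union of $\{g_{\mathrm{rd}}, g_{E_2}\}$ and the degenerate product limit described in Theorem A. In all cases the ODE extends to all $t \in (-\infty, 0]$.

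For the necessary direction, I argue contrapositively. If $y_0 < z_0$, then using the explicit Ricci formulas I would derive a differential inequality of the form $\frac{d}{dt}(z/y) \le -c\,(z/y - 1)(z/y)$ for the normalized flow on the open region $\{y < z\}$, with $c > 0$ uniform on backward-bounded orbits. Separating variables and reversing time, this forces $z/y \to \infty$ in finite backward time, contradicting ancient existence. If instead $y_0 = z_0$ but $z_0 > s_0$, then by invariance of $\{y = z\}$ we remain in the reduced family, where a parallel monotonicity argument on $y/s$—analogous to the treatment of the two-parameter family $g_{x,s}$ in \cite{buzano2014ricci}—rules out ancient existence. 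The main obstacle will be establishing the differential inequality for $z/y$ with a uniform constant: the Ricci formulas couple all four eigenvalues, so one must combine the direct computation with a priori bounds from the monotonicity of scalar curvature to ensure that pathological backward behavior of the other eigenvalues cannot neutralize the blow-up of $z/y$.
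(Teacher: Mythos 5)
Your overall frame (pass to the normalized flow, exploit the invariance of $\{y=z\}$ and monotone ratio quantities) matches the paper, but both directions have genuine gaps at their decisive steps. For sufficiency, the claim that a backward orbit with $x\le y=z\le s$ ``stays in a compact subset of the (appropriately compactified) phase space'' and therefore extends to $(-\infty,0]$ is not a proof and is misleading: the actual ancient orbits are \emph{divergent} in $\M^G_1$ (backwards one has $y,s\to\infty$ with $y/s\to\frac{1}{n+1}$ or $y=s$, and $x\to 0$), and compactness of a compactification does not by itself give backward completeness of the ODE. What is needed, and what the paper supplies, is a quantitative chain: the sets $x=y$ and $y=s$ are invariant, so the region is preserved; backwards $s\to\infty$ and the sign of $(y/s)'$ forces $y/s\to\frac{1}{n+1}$; hence $S\to 0$, in particular $S$ is bounded below, and the gradient-flow energy inequality then gives $\tilde{T}_{\text{min}}=-\infty$ (and, via Lafuente's scalar-curvature characterization, ancientness of the unnormalized flow). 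Your identification of the $\alpha$-limit with the $S^3\times\RR^{4n}$ degeneration of Theorem A is also wrong: that is the \emph{forward} degeneration; backwards these solutions collapse along the $S^1$ Hopf fibration to Einstein metrics on $\CCPP^{2n+1}$.

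For necessity, your differential-inequality route for ruling out $y<z$ is a legitimately different (and potentially more elementary) idea than the paper's, which instead invokes Pediconi's structure theorem for $0$-Palais--Smale solutions to force $y/z\to 1$ along a backward subsequence and then uses monotonicity of $y/z$ together with $y/z\le 1$ to conclude $y\equiv z$. But the step you flag as ``the main obstacle'' is exactly the missing proof: one computes $(z/y)'=-8\,\frac{z}{y}\bigl(\frac{z}{y}-1\bigr)\,\frac{n(xyz)^{1+\frac{1}{2n}}+y+z-x}{xz}$, so a uniform constant $c$ amounts to an a priori upper bound on $x$ along ancient backward orbits; this in turn rests on $S>0$ for ancient solutions, which is itself a nontrivial fact (proved in the paper via Bochner plus a Cauchy--Schwarz comparison argument), not a consequence of ``monotonicity of scalar curvature.'' Without that bound the blow-up argument does not close. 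Similarly, for $y=z>s$, monotonicity of $y/s$ under the backward flow alone yields no contradiction: one must also show $y/s$ stays bounded on ancient solutions (using $S\ge 0$ and the explicit scalar-curvature estimate) and that the only possible finite nonzero backward limits of $y/s$ are $1$ and $\frac{1}{n+1}$; only then does monotone increase past $1$ contradict ancientness. As written, the proposal names the right monotone quantities but leaves unproved precisely the estimates that make the classification work.
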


These (non-isometric) ancient solutions all have a larger isometry group, namely $\Sp(n+1)\Sp(1)$, $\Sp(n+1)\uU(1)$, or $\uU(2n+2)$ (see Theorem \ref{U(1)}). Two ancient solutions converge, under the backwards flow, to Jensen's second Einstein metric and are non-collapsed (see \cite{bakas2012ancient}). All the remaining ones are collapsed and can be viewed as shrinking the fiber of the Hopf fibration $S^1\to S^{4n+3}\to\mathbb{CP}^{2n+1}$ and simultaneously letting the metric on the base vary. One solution parametrizes the well known Berger metrics. The rest of the solutions are new. If one rescales appropriately, then, under the backwards flow, these solutions converge in the Gromov-Hausdorff topology to Ziller's second homogeneous Einstein metric on $\mathbb{CP}^{2n+1}$ \cite{ziller1982homogeneous}. Similar to the ancient solutions found in \cite{bohm2019optimal}, the limit solitons do not depend continuously on the initial metric.
Figure \ref{fig: ancient4} illustrates the behaviour of the backwards flow for the volume-normalized solutions, which can be obtained by setting $x=\frac{1}{y^2 s^{4n}}$. The $y$-axis represents the ratio $y/s$ and the $x$-axis represents the value of $s$. See also Figure \ref{fig: two-parameter families} and Figure \ref{Scal1} for the graph of the scalar curvature functional on the set of volume 1 metrics.
 \begin{figure}[h]

 		\resizebox{163mm}{!}{\includegraphics{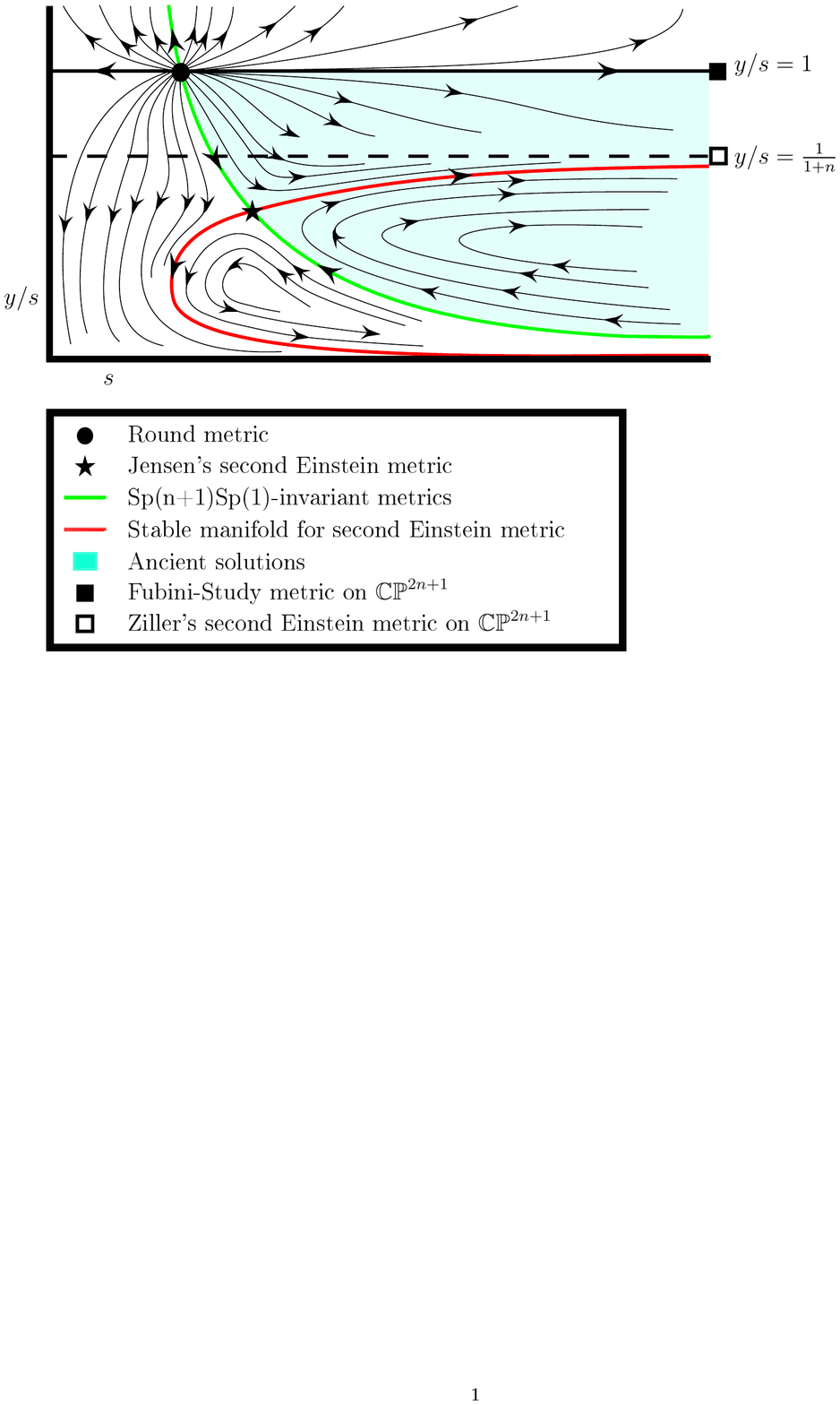}}
 		 	\centering{
 		\caption{Backwards flow of $\Sp(n+1)\uU(1)$-invariant metrics}
 		\label{fig: ancient4}}
 \end{figure}

The paper is organized as follows. In Section $1$ we discuss known properties of the Ricci flow on homogeneous spaces and describe the relevant homogeneous metrics on spheres in more detail. In Section $2$ we relate the existence-time of the Ricci flow on any compact homogeneous space with that of the normalized flow. Here we prove that being ancient for the Ricci flow is equivalent to being ancient for the normalized flow, and that the normalized flow develops a finite-time singularity unless it converges to an Einstein metric. In Section $3$ we study the dynamics of the forwards Ricci flow for $\Sp(n+1)$-invariant metrics and prove Theorem A. In Section $4$ we study the backwards flow and prove Theorem B.

I would like to thank my PhD advisor Wolfgang Ziller for his many helpful comments, discussions, and suggestions. Part of this research was conducted at IMPA, and I am grateful for their hospitality.
\section{Preliminaries}
	
	In the remainder of the paper we alternate between the notation $g_t$ and $g(t)$ for a solution of the Ricci flow, and between $\tg_t$ and $\tg(t)$ for a solution of the normalized flow, whenever convenient. All manifolds and homogeneous spaces are assumed to be compact.
	
	We denote by $\M$ the space of Riemannian metrics on the manifold $M^n$ and $\M^G\subset\M$ the space of $G$-invariant metrics, where $G$ is a Lie group acting on $M$. Likewise, we denote the space of volume-$1$ metrics on $M$ by $\mathcal{M}_1$ and the space of $G$-invariant volume-$1$ metrics by $\M^G_1\subset\M^G$.

	 The space $\M$ can be endowed with the $L^2$ inner product, which assigns to any symmetric $2$-tensor field $h$ (viewed as a tangent vector to a metric $g\in\M$) the length 
 	 $$||h||^2_g=\int_M g(h,h)d\mu_g,$$
	 where $d\mu_g$ is the volume element for $g$.
	 
	 We denote by $\textbf{S}$ the total scalar curvature functional on $\M$:
	 $$\textbf{S}(g)=\int_M S(g) d\mu_g,$$
	 where $S(g)$ is the scalar curvature of $g$.
	 
	 It is well known that, restricted to $\M_1$, the $L^2$ gradient of $\textbf{S}$ is given by the negative traceless Ricci tensor $\nabla\textbf{S}=-\Ric^0(g)=-(\Ric(g)-\frac{S(g)}{n}g)$ (e.g. \cite{besse2007einstein}, p.120). In particular, Einstein metrics are the critical points of $\textbf{S}$.
	
	 Let $M=G/H$ be a homogeneous space where $H\le G$ are compact Lie groups with Lie algebras $\fh\subset \fg$.
	 Since $G$ is compact we can fix an $\Ad(G)$-invariant inner product $Q$ on $\mathfrak{g}$. Let $\mathfrak{p}$ be the $Q$-orthogonal complement of $\mathfrak{h}$ in $\mathfrak{g}$ so that $\mathfrak{g}=\mathfrak{h}\oplus\mathfrak{p}$. Then via action fields there is an isomorphism $\mathfrak{p}\simeq T_{p}M: X\mapsto\dt|_{t=0}\exp(tX)\cdot p$ where $p\in M$ is the identity coset.
	 Moreover, there is a one-to-one correspondence between $\Ad(H)$-invariant inner products on $\mathfrak{p}$ and $G$-invariant metrics on $G/H$, and hence $\M^G$ is a finite-dimensional manifold.	 
	 
	 Note also that for homogeneous metrics, scalar curvature and Ricci curvature are constant, so, in particular, on $\M^G_1$, the functional $\mathbf{S}$ just assigns to each metric its scalar curvature at a point. From now on, we restrict to $\M^G_1$ and identify $\mathbf{S}(g)$ with $S(g)$, so that the $L^2$ gradient of $S$ is $\nabla S(g)=-\Ric^0(g)$.

	Recall that a solution of the Ricci flow is a smooth family of metrics $g_t\in \M$ satisfying
	
	$$
	\frac{\partial}{\partial t}g_t=-2\Ric(g_t)
	.$$
	Since the Ricci tensor is diffeomorphism invariant, isometry groups are preserved under the Ricci flow. That is, if $g_0\in\M^G$, then $g_t\in\M^G$ for all $t$.

The normalized Ricci flow, which we denote by $\tg_t$, is given by the equation

$$
\frac{\partial}{\partial t}\tg_t=2\left(-\Ric(\tg_t)+\frac{\mathbf{S}(\tg_t)}{n \text{Vol}_{\tg_t}(M)}\tg_t\right).
$$
The normalized Ricci flow preserves volume and can be obtained from the Ricci flow as follows.
Let $g(t)$ be a solution of the Ricci flow and let $S(t):=S(g(t))$. The corresponding solution of the normalized flow is given by $\tilde{g}(f(t))=r(t)g(t)$ where $$r(t)=\exp\left(\frac{2}{n}\int_0^t S(\tau)d\tau\right),\quad f'(t)=\exp\left(\frac{2}{n}\int_0^t S(\tau)d\tau\right),$$ and $f(0)=0$ (see for instance \cite{chow2006hamilton}). Hence we can restrict the normalized flow to $\M^G_1$ where it becomes an ODE given by 
$$
\frac{\partial}{\partial t}\tg_t=2\left(-\Ric(\tg_t)+\frac{S(\tg_t)}{n }\tg_t\right)=2\nabla S(\tg_t).
$$
In particular, the normalized Ricci flow coincides with the $L^2$ gradient flow for $S$.  
 For the remainder of the paper, we denote by $(T_\text{min},T_\text{max})$ the maximal time interval on which the Ricci flow exists, as well as $(\tilde{T}_{\text{min}},\tilde{T}_{\text{max}})$ for the normalized flow.

In \cite{bohm2004variational}, the authors studied the global behaviour of $S$ on $\M^G_1$ with the goal of determining sufficient conditions for the existence of a $G$-invariant Einstein metric. In particular, they proved that, for any fixed $\epsilon>0$, the functional $S$ satisfies the Palais-Smale compactness condition on the set $(\M^G_1)_\epsilon=\{g\in\M^G_1\sst S(g)\ge\epsilon\}$. That is, every sequence of metrics $\{g_i\}_{i=1}^\infty$  in $(\M^G_1)_\epsilon$ with $|S(g_i)|$ bounded and $|\nabla S(g_i)|=|\Ric^0(g_i)|\to 0$ has a convergent subsequence, which, in particular, converges to an Einstein metric. As a consequence, the set of $G$-invariant Einstein metrics has only finitely many components, and each of them is compact. They also noted that this result is optimal in the sense that it is impossible to have a convergent sequence of metrics in $\M^G_1$ with $S(g_i)<0$ and $|\Ric^0|\to 0$ since the limit would have to be an Einstein metric of negative scalar curvature, or would have to be Ricci flat. The first possibility is ruled out by Bochner's theorem, and the second can only occur if $M$ is flat by Alekseevsky-Kimel'fel'd \cite{alekseevskii1975structure}. On the other hand, there may exist sequences $\{g_i\}_{i=1}^{\infty}$ with $S(g_i)>0$, $S(g_i)\to 0$ and $|\Ric^0(g_i)|\to0$, so-called $0$-Palais Smale sequences, which do not converge unless $M$ is a torus.

From now on, we will assume that $M$ is a compact homogeneous space which is not a torus. By Theorem $1$ and Theorem $2$ in \cite{bohm2015long}, a homogeneous solution $g_t$ to the Ricci flow on $M$ develops a Type-1 singularity in finite time. Recall that a finite-time singularity of the Ricci flow is said to be Type-1 if the curvature tensor blows up at most linearly, that is, there exists some $C>0$ so that $|\Rm(g_t)|(T_{\text{max}}-t)\le C$ for $t$ near $T_\text{max}<\infty$, where $|\Rm|$ is the norm of the curvature tensor. By \cite{lafuente2015scalar}, this implies that the scalar curvature goes to $+\infty$ near the singular time. In particular, by starting the flow at a later time, we can assume $S(g_0)>0$. 



Since $\tg_t$ is the gradient flow for $S$, we have

 \begin{equation} d_{L^2}(\tg_t,\tg_0)\le \int_0^t |\Ric^0(\tg_s)|ds\le t^{1/2}\left(\int_0^t |\Ric^0(\tg_s)|^2 ds\right)^{1/2}, \label{gradientflow}
 \end{equation}
  and $$S(\tg_t)-S(\tg_0)=\int_0^t |\Ric^0(\tg_s)|^2 ds.$$ Thus, there are two possibilities for solutions $\tg_t$ of the normalized Ricci flow on $\M^G_1$. The first possibility is that $S(\tg_t)\to\infty$ as $t\to \tilde{T}_\text{max}$, and the second is that $S(\tg_t)\le C$ for all $t\in(0,\tilde{T}_\text{max})$, in which case (\ref{gradientflow}) implies that $\tilde{T}_\text{max}=\infty$. Similarly, (\ref{gradientflow}) implies that if $S$ is bounded from below then $\tilde{T}_\text{min}=-\infty$. In the case that $S(\tg_t)\le C$, Palais-Smale further implies that $\tg_t$ converges to an Einstein metric as $t\to\infty$.


We will also examine solutions of the Ricci flow as $t\to-\infty$. As remarked above, a lower bound on $S$ already implies that $\tg_t$ is ancient. If $g_t$ is an ancient solution of the Ricci flow then it is an easy consequence of the maximum principle applied to the evolution equation for $S$, 
\begin{equation}
S(g(t))'=\Delta S(g(t))+2|\Ric(g(t))|^2,
\end{equation}
that $g_t$ either has positive scalar curvature for all time, or is Ricci flat (e.g. \cite{chow2006hamilton} p. 102).
 Since $G/H$ is not a torus, the latter is ruled out. Hence, the corresponding solution $\tg_t$ of the normalized flow has $S(\tg_t)>0$ for all $t$, which implies that $\tilde{T}_{\text{min}}=-\infty$ and $|\Ric^0(\tg_t)|\to 0$ as $t\to-\infty$. Thus there are two possibilities for the corresponding solution $\tg_t$ of the normalized flow. The first possibility is that $S(\tg_t)>\epsilon>0$ for all time, in which case, by Palais-Smale, $\tg_t$  converges to an Einstein metric as $t\to-\infty$. The second possibility is that $S(\tg_t)\to 0$, i.e., $\tg_t$ is $0$-Palais-Smale.
$0$-Palais-Smale sequences were studied in \cite{bohm2004variational} where the authors showed that if one exists, then there exists an
intermediate subgroup $H\le K\le G$ such that $K/H$ is a torus.
In \cite{pediconi2019diverging}, F. Pediconi further proved that every divergent sequence of metrics in $\M^G_1$ with $|\text{Rm}|$ bounded has a subsequence which asymptotically approaches a submersion metric for a torus fibration with shrinking fibers as in (\ref{fibration-intro}). By the Gap Theorem for compact homogeneous spaces, $|\Rm(g)|\le C|\Ric(g)|$ (see [B\"{o}2] Theorem 4), and hence $0$-Palais Smale sequences are special examples of divergent sequences of metrics with bounded curvature.

Let us recall the definition of submersion metrics.
If $\mathfrak{h}\subset\mathfrak{k}\subset\mathfrak{g}$ is an intermediate Lie subalgebra with $\mathfrak{k}=\Lie(K)$, then we can further decompose $\mathfrak{p}=\mathfrak{p}_\mathfrak{k}+\mathfrak{p}_\mathfrak{k}^\perp$ where $\mathfrak{p}_\mathfrak{k}=\mathfrak{p}\cap\mathfrak{k}$. We say that an $\Ad(H)$-invariant inner product $g$ on $\mathfrak{p}$ is a $\mathfrak{k}$-submersion metric provided that $\mathfrak{p}=\mathfrak{p}_\mathfrak{k}+\mathfrak{p}_\mathfrak{k}^\perp$ is orthogonal with respect to $g$ and that the restriction of $g$ to $\mathfrak{p}_\mathfrak{k}^\perp$ is $\Ad(K)$-invariant.

Note that in this language,  $T_e(K/H)=\mathfrak{p}_\mathfrak{k}$ and $T_e(G/K)=\mathfrak{p}_\mathfrak{k}^\perp$.
The orthogonality assumption and $\Ad(K)$-invariance imply that the homogeneous fibration \begin{equation}
K/H\to G/H\to G/K \label{fibration-intro}
\end{equation}
is a Riemannian submersion, where the induced metric on $G/K$ is $G$-invariant (see \cite{besse2007einstein} p. 257). As in \cite{besse2007einstein}, such a submersion, in addition, has totally geodesic fibers. 

If we start with a $\mathfrak{k}$-submersion metric, scale the metric on the fiber by $t$, and normalize volume to be $1$, we obtain a ``divergent'' path of metrics in $\mathcal{M}^G_1$, whose scalar curvature is given by the formula
\begin{equation}
t^{f/n}\left(  \frac{1}{t}S(K/H)+S(G/K)-t||A||^2 \right),
\label{scale-fibers}
\end{equation}
where $f=\dim(K/H)$, $S(K/H)$ is the scalar curvature of $g$ restricted to $K/H$, $S(G/K)$ is the scalar curvature of the metric induced by $g$ on $G/K$, and $||A||$ is the norm of the O'Neill tensor computed with respect to $g$ (see \cite{besse2007einstein} p. 253).  Hence if $K/H$ is not a torus (and $g$ is chosen so that $S(K/H)>0$), then $S\to \infty$ as $t\to 0$. On the other hand, if $K/H$ is a torus, then $S\to 0$ as $t\to 0$ (see \cite{wang1986existence}, \cite{bohm2004variational}). Conversely, the existence of a path with $S\to\infty$, or a path with $S\to 0$ and $|\Ric^0|\to 0$, implies the existence of such subgroups (\cite{wang1986existence},\cite{bohm2004variational}).

We now describe the Ricci flow on the two-parameter families of homogeneous metrics on spheres as studied in \cite{buzano2014ricci} and \cite{bakas2012ancient}. When the volume is normalized, they become one-parameter families, and the normalized flow can be understood in terms of the gradient flow for the single-variable function $S(t)$ on $\M^G_1$, where $t$ is the scale of the fiber in the Hopf fibration (\ref{Hopf}). Figure $2$ depicts the graph of $S$ as a function of $t$.  
\begin{figure}[h]
	\centering{
		\resizebox{150mm}{!}{\includegraphics{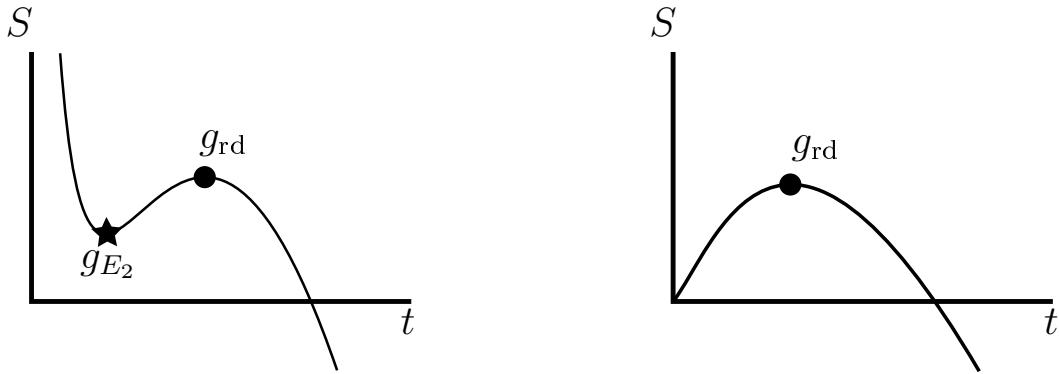}}
		\caption{Scalar curvature of two-parameter families of metrics as fiber is scaled by $t$. The graph on the left describes $\Spin(9)$ and $\Sp(n+1)\times\Sp(1)$-invariant metrics, and the graph on the right describes $\uU(n+1)$-invariant metrics.}
		\label{fig: two-parameter families}
	}
\end{figure}

For the graph on the left,  there are exactly two $G$-invariant Einstein metrics, the round metric, which we denote by $g_{\text{rd}}$, and a second Einstein metric (see \cite{jensen1973einstein},\cite{bourguignon1978curvature}), which we denote in each case by $g_{E_2}$ (although these are not isometric for different $G$). It follows from our remarks above that there are exactly two ancient solutions $\tg_t$, both converging to $g_{E_2}$ under the backwards flow.

For the graph on the right, every solution converges to $g_{\text{rd}}$. There is one solution $\tg_s$ with $S\to 0$ as the $S^1$ fiber shrinks to a point under the backwards flow, and hence by (\ref{gradientflow}) this solution is ancient. In \cite{bakas2012ancient}, it was shown in both cases that these conclusions also hold for the Ricci flow, although the arguments are more involved. Note also that in Theorem \ref{ancient for ricci implies ancient normalized}, we prove that a solution to the Ricci flow is ancient if and only if the corresponding solution for the normalized flow is also ancient.

For the case of left-invariant metrics on $S^3$,
in \cite{isenberg1992ricci} the authors showed that the normalized flow always converges to the round metric. In \cite{cao2008backward}, the authors further proved that the only ancient solutions for the Ricci flow are the Berger spheres, i.e., metrics satisfying $x\le y=z$ where $x,y$ and $z$ are the eigenvalues of the metric.

The only remaining family of homogeneous metrics on spheres are the $\Sp(n+1)$-invariant metrics, which we now describe.

\subsection{$\Sp(n+1)$-invariant metrics on spheres}

We view $S^{4n+3}$ as the unit sphere in $\mathbb{H}^{n+1}=\mathbb{R}^{4n+4}$ with the standard Euclidean product. The group of quaternionic-linear isometries $\text{Sp}(n+1)$ acts transitively on $S^{4n+3}$ with stabilizer $\text{Sp}(n)$ at the point $p=(1,0,...,0)$, so that $S^{4n+3}\simeq\Sp(n+1)/\Sp(n)$. With respect to the $\text{Ad(Sp}(n+1))$-invariant inner product $Q$ on $\mathfrak{sp}(n+1)$, $Q(A,B)=-\frac{1}{2}\Re(\text{trace}(AB))$, we have the orthogonal decompositions $\mathfrak{sp}(n+1)=\mathfrak{sp}(n)\oplus\mathfrak{p}$ and $\mathfrak{p}=\fp_0\oplus\fp_1$, where $\fp_0\simeq\mathfrak{sp}(1)=\langle i,j,k\rangle$ is the Lie algebra embedded diagonally as
\[
\begin{pmatrix}
\mathfrak{sp}(1)
& \rvline & 0 \\
\hline
0 & \rvline &
0
\end{pmatrix},
\]
and $\fp_1\simeq \mathbb{H}^n$ via
$$X\mapsto 
\begin{pmatrix}
0
& \rvline & -\overline{X}^t \\
\hline
X & \rvline &
0
\end{pmatrix}.
$$
Notice that $i,j,k$ have length $\frac{1}{2}$ and are $Q$-orthogonal. The representation of $\Ad(\Sp(n))$ on $\mathfrak{p}$ is trivial on $\fp_0$ and acts by usual matrix multiplication on $\fp_1$. Hence, by Schur's lemma any $\Ad(\Sp(n))$ invariant inner product on $\mathfrak{p}$ is of the form $\sigma+s\langle \cdot ,\cdot \rangle_{\mathbb{H}^n}$ where $\langle\cdot,\cdot\rangle_{\mathbb{H}^n}$ is the Euclidean inner product on $\mathbb{H}^n$ and $\sigma$ is any inner product on $\fp_0\simeq\mathbb{R}^3$. As observed in \cite{ziller1982homogeneous}, the normalizer $N(\Sp(n))/\Sp(n)=\text{SO}(3)$ acts on $\fp_0$ by $Q$-isometries, so we can diagonalize $\sigma$ with respect to the $Q$-orthogonal basis $\langle i,j,k\rangle$.



Henceforth, we write an $\text{Sp}(n+1)$-invariant metric on $S^{4n+3}$ as \begin{equation}g=x\langle,\rangle|_{(i)}+y\langle,\rangle|_{(j)}+z\langle,\rangle|_{(k)}+ s\langle,\rangle|_{\mathbb{H}^n}, \label{metrics}
\end{equation} where $\langle,\rangle$ is the standard metric on $\mathbb{H}^{n+1}=\mathbb{R}^{4n+4}$. Note also that via right translations, $N(H)/H=\uSO(3)$ acts by diffeomorphisms on $G/H$ fixing $H$ and induces the usual linear action of $\uSO(3)$ on $\fp_0=\RR^3$. In particular, there exist global diffeomorphisms of $S^{4n+3}$ which  switch the signs of $i,j,k\in T_{p}S^{4n+3}$, two at a time. These are only isometries if the metric is of the form (\ref{metrics}). Since isometries are always preserved under the Ricci flow, metrics of the form (\ref{metrics}) are preserved as well.


 For studying the Ricci flow, it will often be convenient to consider the Ricci endomorphism. We denote the Ricci endomorphism by $\text{ric}$ and the Ricci curvature tensor by $\Ric$, i.e., $\Ric(X,Y)=g(\text{ric}(X),Y)$.

For the above metrics, the Ricci endomorphism decomposes as $$
\ric=r_i\Id|_{(i)}+r_j\Id|_{(j)}+r_k\Id|_{(k)}+r_h\Id|_{\HH^n},	
$$
where
\begin{align}
r_i &=2\left(\frac{x^2-y^2-z^2}{xyz}\right)+\frac{4}{x}+\frac{4nx}{s^2}\label{ric}\nonumber\\
r_j &=2\left(\frac{y^2-x^2-z^2}{xyz}\right)+\frac{4}{y}+\frac{4ny}{s^2} \\
r_k &=2\left(\frac{z^2-x^2-y^2}{xyz}\right)+\frac{4}{z}+\frac{4nz}{s^2}\nonumber \\
r_h &=-2\left(\frac{x+y+z}{s^2}\right)+\frac{4 n+8}{s} \nonumber
\end{align}
(see \cite{ziller1982homogeneous}).
Thus, the scalar curvature is given by the formula \begin{equation}
S=\frac{4}{x}+\frac{4}{y}+\frac{4}{z}+\frac{16n(n+2)}{s}-4n\left(\frac{x+y+z}{s^2}\right)-2\left(\frac{x^2+y^2+z^2}{xyz}\right). \label{Scalar xyzs}
\end{equation}

As in \cite{ziller1982homogeneous}, for any $T\in\mathbb{H}^n$ we have the sectional curvatures $K(i,T)=\frac{x}{s^2}$, $K(j,T)=\frac{y}{s^2}$, and $K(k,T)=\frac{z}{s^2}$. From this and the fact that an isometry preserves eigenspaces of the Ricci tensor, it is not difficult to see that there are no further isometries among metrics of the form (\ref{metrics}), besides permuting the variables $x,y$ and $z$.


Our work is closely related to the examples of homogeneous Einstein metrics on spheres and projective spaces. These were classified by Ziller in \cite{ziller1982homogeneous} and can be obtained by scaling the fibers in the Hopf fibrations
$$S^1\to S^{4n+3}\to \mathbb{CP}^{2n+1} \quad\text{       }\quad S^3\to S^{4n+3}\to \mathbb{HP}^n \quad\text{       }\quad S^7\to S^{15 }\to S^8.$$

In \cite{ziller1982homogeneous} it was shown that the only $\Sp(n+1)$-invariant Einstein metrics on $S^{4n+3}$ are, up to scaling, the round metric $g_{\text{rd}}$, given by $x=y=z=s=1$, and Jensen's second Einstein metric $g_{E_2}$, given by $x=y=z=1$ and $s=2n+3$.

If we view $\langle i\rangle=\fu(1)$ as tangent to the Hopf action, then $\Sp(n+1)$-invariant metrics on $\mathbb{CP}^{2n+1}$ are precisely the metrics induced by $\fu(1)$-submersion metrics on $S^{4n+3}$. Since $\uU(1)\subset N(H)$ acts by fixing $i$ and rotating the $j,k$ plane, $\fu(1)$-submersion metrics on $S^{4n+3}$ satisfy $y=z$ and are hence of the form \begin{equation}
g=x\langle,\rangle|_{(i)}+y\langle,\rangle|_{(j)}+y\langle,\rangle|_{(k)}+ s\langle,\rangle|_{\mathbb{H}^n}.
\end{equation}
On $\mathbb{CP}^{2n+1}$, these induce metrics of the form $$
y\langle,\rangle|_{(j)}+y\langle,\rangle|_{(k)}+ s\langle,\rangle|_{\mathbb{H}^n}, \label{CPn metrics}
$$
and Ziller showed that the only two Einstein metrics in this family are given by $y=s$ (the Fubini-Study metric), which we denote  by $g_{\mathbb{CP}^{2n+1}}^{\text{FS}}$, and
 $y/s=1/(n+1)$, which we denote by $g_{\mathbb{CP}^{2n+1}}^2$. %
 Note that metrics of this form can be obtained by scaling the fibers and base of the Hopf fibration $S^2\to\CCPP^{2n+1}\to\mathbb{HP}^n$ as in \cite{ziller1982homogeneous}.

\section{General Results}

In this section, we discuss some results that hold for all compact homogeneous spaces, relating properties of solutions of the Ricci flow to those of the normalized flow. We first prove that, as in the case of the Ricci flow, the normalized flow develops a singularity in finite time, unless it converges to an Einstein metric. Our proof is similar to the proof of Theorem 4.1 in \cite{bohm2015long}.
\begin{thm}
Let $\tg_t$ be a solution to the normalized Ricci flow on a compact homogeneous space. 
Then $\tilde{T}_{\text{max}}=\infty$ if and only if $\tg_t$ converges to an Einstein metric. Furthermore, if $\tilde{T}_{\text{max}}<\infty$ then $S(\tg_t)\to\infty$ as $t\to\tilde{T}_{\text{max}}$.

 \label{thm: finite extinction}
\end{thm}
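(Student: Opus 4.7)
I would split the proof into three steps: the easy direction of the iff (converging to Einstein implies $\tilde{T}_{\text{max}}=\infty$), the ``furthermore'' clause, and the hard direction of the iff ($\tilde{T}_{\text{max}}=\infty$ implies convergence to an Einstein metric). For the easy direction, suppose $\tg_t$ converges as $t\to\tilde{T}_{\text{max}}$ to an Einstein metric $g_\infty\in\M^G_1$. Then $\Ric^0(g_\infty)=0$, making $g_\infty$ a stationary point of the smooth ODE $\partial_t\tg=-2\Ric^0(\tg)$ on the finite-dimensional manifold $\M^G_1$; by ODE uniqueness the flow extends past $\tilde{T}_{\text{max}}$ as the constant $g_\infty$, forcing $\tilde{T}_{\text{max}}=\infty$.

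The ``furthermore'' clause and the easy subcase of the hard direction both follow at once from the dichotomy already recorded in the preliminaries. Since $\frac{d}{dt}S(\tg_t)=2|\Ric^0(\tg_t)|^2\ge 0$, $S(\tg_t)$ is monotone non-decreasing. The preliminaries show that either $S(\tg_t)\to\infty$ as $t\to\tilde{T}_{\text{max}}$, or $S(\tg_t)\le C$ for all $t\in(0,\tilde{T}_{\text{max}})$, in which case (\ref{gradientflow}) forces $\tilde{T}_{\text{max}}=\infty$ and Palais--Smale yields convergence to an Einstein metric. Contrapositively, $\tilde{T}_{\text{max}}<\infty$ rules out the bounded case, so by monotonicity $S(\tg_t)\to\infty$---this is the ``furthermore''. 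Likewise, $\tilde{T}_{\text{max}}=\infty$ combined with $S$ bounded already yields convergence to an Einstein metric.

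It remains to rule out the scenario $\tilde{T}_{\text{max}}=\infty$ together with $S(\tg_t)\to\infty$. Mirroring Theorem 4.1 of \cite{bohm2015long}, my plan is to establish a scale-invariant gap
\[
|\Ric^0(g)|^2 \ge c\, S(g)^2 \qquad \text{for every } g\in\M^G_1 \text{ with } S(g)\ge c_0,
\]
for constants $c,c_0>0$ depending only on $G/H$, where $c_0$ is chosen above $\max S$ over the (compact, by \cite{bohm2004variational}) set of all $G$-invariant Einstein metrics. Granting this, once $S(\tg_t)\ge c_0$ the evolution $\frac{d}{dt}S(\tg_t)=2|\Ric^0(\tg_t)|^2$ yields the Riccati inequality $\frac{d}{dt}S(\tg_t)\ge 2c\,S(\tg_t)^2$; a standard integration forces $S(\tg_t)$ to blow up in finite time, contradicting $\tilde{T}_{\text{max}}=\infty$.

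The main obstacle is proving the scale-invariant gap, which I would do by contradiction. If it failed there would be a sequence $g_n\in\M^G_1$ with $S(g_n)\to\infty$ and $|\Ric^0(g_n)|^2/S(g_n)^2\to 0$. Rescaling $\hat g_n:=S(g_n)\,g_n$ produces homogeneous metrics with $S(\hat g_n)=1$ and $|\Ric^0(\hat g_n)|\to 0$ but diverging volume. Invoking Pediconi's compactness \cite{pediconi2019diverging} for divergent homogeneous sequences with bounded curvature, such a sequence would asymptotically approach a torus-fiber submersion as in (\ref{fibration-intro}), whose scalar curvature by the scale-fibers formula (\ref{scale-fibers}) tends to $0$---in tension with $S(\hat g_n)=1$. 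The delicate point, paralleling \cite{bohm2015long}, is actually applying Pediconi's theorem: one must either verify boundedness of $|\Rm(\hat g_n)|$, or handle the unbounded-curvature case separately by means of the Gap Theorem $|\Rm|\le C|\Ric|$ for compact homogeneous spaces.
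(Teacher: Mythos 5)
Your architecture differs genuinely from the paper's: the paper deduces the key Riccati inequality from B\"ohm's finite-time Type-1 singularity theorem together with the soliton structure of parabolic blow-up limits $E^k_\infty\times\RR^{n-k}$ (the eigenvalues of $\Ric(S(g)g)$ approach $1/k$ or $0$, and $k$ is independent of the time sequence), whereas you propose a flow-independent, scale-invariant gap $|\Ric^0(g)|^2\ge c\,S(g)^2$ on $\{g\in\M^G_1\sst S(g)\ge c_0\}$ and then the same comparison argument $S'\ge cS^2$. Your easy direction and your use of the preliminaries' dichotomy (bounded $S$ forces $\tilde T_{\mathrm{max}}=\infty$ and, via Palais--Smale, convergence to an Einstein metric; unbounded $S$ plus monotonicity gives the ``furthermore'') are fine, except that the easy direction needs only that a maximal ODE solution with finite lifespan must leave every compact subset of $\M^G_1$, not stationarity or uniqueness at the limit.

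The genuine gap is in your proof of the key inequality. Pediconi's theorem, as used in this paper, concerns \emph{divergent sequences of unit-volume} invariant metrics with bounded curvature; your rescaled metrics $\hat g_n=S(g_n)g_n$ have $\mathrm{Vol}(\hat g_n)=S(g_n)^{n/2}\to\infty$, and their unit-volume representatives are the original $g_n$, whose curvature is unbounded since $S(g_n)\to\infty$. So neither sequence meets the hypotheses, and the conclusion you hope to contradict (scalar curvature tending to $0$ along a torus-fibration collapse) is a unit-volume phenomenon that has no reason to apply to $\hat g_n$: a divergent bounded-curvature sequence of non-normalized metrics can simply expand (stretch one factor of a product) with scalar curvature bounded away from zero and nothing shrinking. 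The delicate point is thus not boundedness of $|\Rm(\hat g_n)|$ --- that follows from the Gap theorem and $|\Ric(\hat g_n)|^2=|\Ric^0(\hat g_n)|^2+\tfrac1n$ --- but the volume normalization. The gap inequality itself is true and admits an elementary proof: if a counterexample sequence has $S(g_n)$ bounded, Palais--Smale produces an Einstein limit with $S\ge c_0>S_0$, impossible by your choice of $c_0$; if $S(g_n)\to\infty$, then $S(\hat g_n)=1$ and $|\Ric^0(\hat g_n)|\to 0$ force all eigenvalues of the Ricci endomorphism of $\hat g_n$ to converge to $\tfrac1n$, so eventually $\Ric(\hat g_n)\ge\tfrac{1}{2n}\hat g_n$, and Bonnet--Myers together with Bishop--Gromov bounds $\mathrm{diam}(\hat g_n)$ and $\mathrm{Vol}(\hat g_n)$, contradicting $\mathrm{Vol}(\hat g_n)\to\infty$. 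With that step repaired (or replaced by the paper's blow-up argument), your proof goes through.
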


\begin{proof}
B\"{o}hm showed that on a compact homogeneous space that is not a torus, the Ricci flow develops a Type-1 singularity in finite time \cite{bohm2015long}, and hence, by results of 
Naber (\cite{naber2010noncompact}), Enders, M\"{u}ller, Topping (\cite{enders2011type}), Petersen and Wylie (\cite{petersen2009gradient}), along any sequence of times $t_i\to T_{\text{max}}$, a parabolic sequence of rescaled solutions $$g_i(t):=S(g(t_i)) g\left(t_i+\frac{t}{S(g(t_i))}\right)$$ subconverges to a soliton $g_\infty(t)$ on $E^k_\infty\times\RR^{n-k}$, where $E^k_{\infty}$ is a compact homogeneous Einstein manifold and $\RR^{n-k}$ is endowed with the flat metric. Furthermore, the dimension of the Euclidean factor depends only on the initial metric, not on the sequence $t_i$ \cite{bohm2015long}. 
As we will see, the presence of a Euclidean factor in the limit determines whether or not the normalized flow converges to an Einstein metric. We then use the dimension of the Euclidean factor to control the growth of $S$ near the extinction time.

Since the normalized flow is the $L^2$ gradient flow for $S$ we can derive an evolution equation for $S$ under the normalized flow:
 \begin{align*}
S(\tg_t)'=\langle \nabla S,\nabla S\rangle_{L^2}
&=\left\langle-2\left(\Ric(\tg_t)-\frac{S(\tg_t)}{n}\tg_t\right),-2\left(\Ric(\tg_t)-\frac{S(\tg_t)}{n}\tg_t)\right)\right\rangle_{L^2}\\
&=4\left(|\Ric(\tg_t)|^2-\frac{S^2(\tg_t)}{n}\right).		
\end{align*}

	Let $\bar{g}(t)=S(g(t))g(t)$. We claim that the eigenvalues of $\Ric(\bar{g}(t))$ all converge to $r_\infty=\frac{1}{k}$ or $0$. If not, there would exist a $\delta>0$ and a sequence of times $t_i$ such that $\Ric(\bar{g}(t_i))=\Ric(g_i(0))$ has an eigenvalue in $(-\infty,-\delta)\cup(\delta,r_\infty-\delta)\cup(r_{\infty}+\delta,\infty)$. But then the same would be true for any subsequence of $g_i(0)$ and hence also for the limit soliton, which is a contradiction since $S(g_\infty(0))=\lim_{i\to\infty}S(\bar{g}(t_i))=1$ and the dimension of the Einstein factor depends only on the initial metric.

Let $r_1(t),...,r_k(t)$ be the eigenvalues of $\Ric(\bar{g}(t))$ which converge to $r_\infty$. Then for $t$ sufficiently close to $T_{\text{max}}$,
 \begin{align*}|\Ric(g(t))|^2-\frac{S^2(g(t))}{n}&=S^2(g(t))\left(|\Ric(\bar{g}(t))|^2-\frac{1}{n}\right) \\
 &\ge S^2(g(t))\left(\sum_{i=1}^k r_i^2(t)-\epsilon -\frac{1}{n}\right)\\
 &\ge S^2(g(t))\left(k r_{\infty}^2-2\epsilon-\frac{1}{n}\right)=S^2(g(t))\left( \frac{1}{k}-\epsilon-\frac{1}{n}  \right)
 \end{align*}
 Since both sides of the above inequality scale the same way, the same is true for $\tg_t$. Hence if $t$ is sufficiently large and $k<n$, $\frac{\partial S(\tg_t)}{\partial t}\ge C S^2(\tg_t)$, which implies that $S(\tg_t)\to\infty$ in finite time. 

	On the other hand, if the dimension of the Euclidean factor in the limit is zero, then $\bar{g}(t)$ converges to a $G$-invariant Einstein metric, and hence the volume-normalized solution converges to an Einstein metric and $\tilde{T}_{\text{max}}=\infty$.
	

	
\end{proof}

For the main example of our paper, we would also like to classify the ancient solutions for the Ricci flow on $\M^G_1$. Before doing so we first show that on homogeneous spaces, being ancient for the Ricci flow is equivalent to being ancient for the normalized flow. 
\begin{thm}
	A solution $g_t$ of the Ricci flow on a compact homogeneous space is ancient if and only if the corresponding solution $\tg_t$ of the normalized flow is also ancient. Furthermore, if $\tg_t$ is ancient and does not converge to an Einstein metric as $t\to-\infty$, then $S(\tg_t)\to 0$ and $|\Ric^0(\tg_t)|\to 0$ as $t\to-\infty$ and hence $\tg_t$ is a $0$-Palais Smale solution.
	\label{ancient for ricci implies ancient normalized}
\end{thm}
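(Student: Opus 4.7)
The plan is to leverage the explicit rescaling $\tg(f(t)) = r(t) g(t)$, with $r(t) = \exp\!\bigl(\tfrac{2}{n}\int_0^t S(g_\tau)\,d\tau\bigr)$ and $f'(t) = r(t)$, that relates the two flows. Since $f$ is a smooth increasing diffeomorphism between the maximal existence intervals, $\tilde{T}_{\min} = \lim_{t \to T_{\min}^+} f(t)$, and the equivalence of ancientness reduces to analyzing the behaviour of $f$ near $T_{\min}$.

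For the direction $g_t$ ancient $\Rightarrow \tg_t$ ancient, I would invoke the consequence of the maximum principle recalled in the preceding discussion: along any ancient Ricci flow on a compact homogeneous non-torus space one has $S(g_t) > 0$ for all $t$. Since $\Delta S = 0$ in the homogeneous setting, $S'(g_t) = 2|\Ric(g_t)|^2 \ge \tfrac{2}{n}S(g_t)^2$ by Cauchy--Schwarz, and ODE comparison with the solution of $u' = \tfrac{2}{n}u^2$ yields $S(g_t) \le S(g_0)/(1 + 2S(g_0)|t|/n)$ for $t \le 0$. Substituting this estimate into $r(t)$ produces the lower bound $r(t) \ge (1 + 2S(g_0)|t|/n)^{-1}$, and therefore $-f(t) = \int_t^0 r(\tau)\,d\tau$ diverges logarithmically as $t \to -\infty$, which proves $\tilde{T}_{\min} = -\infty$.

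For the converse, let $L := \lim_{s \to -\infty} S(\tg_s) \in [-\infty, S(\tg_0)]$, which exists because $S$ is non-decreasing along the gradient flow. The identity $S'(\tg_s) = 4|\Ric^0(\tg_s)|^2$ gives $\int_{-\infty}^0 |\Ric^0|^2\,ds = (S(\tg_0) - L)/4$, so when $L$ is finite one obtains a sequence $s_i \to -\infty$ with $|\Ric^0(\tg_{s_i})| \to 0$ and, via the Gap Theorem $|\Rm| \le C|\Ric|$, with bounded curvature. The Palais--Smale compactness of \cite{bohm2004variational} then yields a convergent subsequence to an Einstein metric if $L > 0$, while $L < 0$ is excluded by combining Bochner's theorem (no negative homogeneous Einstein metric) with Pediconi's theorem \cite{pediconi2019diverging} (divergent bounded-curvature sequences in $\M^G_1$ asymptotically approach torus submersions, forcing $S \to 0$). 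The case $L = -\infty$ is the main technical obstacle; I would handle it by arguing that $|\Ric^0|$ must then either stay bounded, in which case $\Ric(\tg_s) \approx (S/n)\tg_s$ produces an asymptotically negative Einstein sequence (again contradicting Bochner after a suitable rescaling), or become unbounded, in which case the backward trajectory exits $\M^G_1$ in finite time, contradicting ancientness. Once $L \ge 0$ is secured, $S(\tg_s) > 0$ for all $s < 0$ (vanishing of $S$ on any interval would force a Ricci-flat homogeneous metric, excluded by Alekseevsky--Kimel'fel'd), so the inverse rescaling $c(s) := e^{-(2/n)\int_0^s S\,d\rho} \ge 1$ and the reparametrization $t(s) := \int_0^s c(\rho)\,d\rho$ satisfies $t(s) \to -\infty$, yielding $T_{\min} = -\infty$. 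The ``furthermore'' assertion is immediate from the same trichotomy: the non-Einstein-limit case is precisely $L = 0$, and integrability of $|\Ric^0|^2$ together with uniform Lipschitz control on $|\Ric^0|^2$ in $s$ upgrades the subsequential convergence $|\Ric^0(\tg_{s_i})| \to 0$ to the full pointwise limit.
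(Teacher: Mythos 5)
Your forward direction is correct (a quantitative version of the paper's argument), and your inverse reparametrization $c(s)=\exp\bigl(-\tfrac{2}{n}\int_0^s S(\tg_\rho)\,d\rho\bigr)\ge 1$, $t(s)=\int_0^s c(\rho)\,d\rho$ is a legitimate way to transfer ancientness back to the unnormalized flow once $S(\tg_s)>0$ is known, avoiding the paper's appeal to Lafuente's theorem. The genuine gap is in how you secure $S(\tg_s)>0$, i.e.\ in ruling out $L<0$ and especially $L=-\infty$. The second horn of your dichotomy for $L=-\infty$ --- ``$|\Ric^0|$ becomes unbounded, in which case the backward trajectory exits $\M^G_1$ in finite time, contradicting ancientness'' --- is not an argument: the solution never leaves $\M^G_1$ (volume is preserved), and unboundedness of $|\Ric^0(\tg_s)|$ as $s\to-\infty$ is perfectly compatible with existence for all backward time, with blow-up only at $s=-\infty$. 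You are asserting the finite-time blow-down you need rather than proving it. Your $L<0$ case also leans on Pediconi's theorem together with the claim that asymptotic torus-submersion behaviour forces $S\to 0$ along the subsequence; this is much heavier machinery than necessary and itself needs justification.

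The missing idea is the quantitative use of Bochner's theorem that the paper's proof is built on. On a compact homogeneous space that is not a torus the Ricci tensor always has at least one positive eigenvalue, so at any time with $S(\tg_s)<0$, Cauchy--Schwarz applied to the non-positive eigenvalues gives $S^2\le k\,|\Ric|^2$ with $k\le n-1$. Plugging this pointwise bound into the backward evolution $\partial_s S=4\bigl(\tfrac{S^2}{n}-|\Ric|^2\bigr)$ yields $\partial_s S\le \tfrac{4(k-n)}{n^2}S^2\le -C S^2$ with $C>0$. Since $S$ is non-decreasing in forward time, once $S\le 0$ at a single time (which occurs in both your $L<0$ and $L=-\infty$ scenarios, Ricci-flatness being excluded), $S$ stays negative backward and the ODE inequality forces $S\to-\infty$ in finite backward time, contradicting ancientness. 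This one estimate disposes of all negative-scalar-curvature cases simultaneously, with no case distinction on $L$, no boundedness assumption on $|\Ric^0|$, and no appeal to Pediconi or Palais--Smale. Finally, your ``uniform Lipschitz control on $|\Ric^0|^2$'' used to upgrade subsequential to full convergence in the ``furthermore'' part is asserted rather than proved; it should not be presented as immediate.
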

\begin{proof}
If $g_t$ is ancient then since $S(\tg_t)>0$ for all $t$ and since $\tg_t$ is the gradient flow for $S$, $\tg_t$ is ancient as well (see Section 1).

	In order to prove that if $\tg_t$ is ancient then $g_t$ is also ancient, we first prove that an ancient solution of the normalized flow on a compact homogeneous space has positive scalar curvature. Since a theorem of Lafuente (see \cite{lafuente2015scalar}) states that a homogeneous solution of the Ricci flow with finite backwards singular-time must have $S\to-\infty$ as $t\to T_{\text{min}}$,  it follows that $g_t$ must be ancient as well.
	
	Now, suppose we have an ancient solution to the normalized flow with $S(\tg(0))\le 0$. Then, since $G/H$ is not a torus, and hence is not Ricci flat, $S(\tg(t))<0$ for all $t<0$. 
	By Bochner's theorem, the Ricci tensor of a compact homogeneous space has at least one positive eigenvalue. In particular, if $\{r_i\}_{i=1}^n$ are the eigenvalues of $\Ric$ and $r_1,...,r_{n-k}$ are all the positive eigenvalues (where $k\le n-1$), then by Cauchy-Schwarz and the fact that $S<0$, $$|S|^2\le \left|\sum_{i=n-k+1}^n r_i\right|^2\le k\sum_{i=n-k+1}^n r_i^2\le k|\Ric|^2.$$
	
	For the backwards flow, the evolution equation for $S$ is $\frac{\partial S}{\partial t}=4\left(\frac{S^2}{n}-|\Ric|^2\right)$, and hence $$\frac{\partial S}{\partial t}=
	4\left(\frac{S^2}{n}-|\Ric|^2\right)\le 4\left( \frac{k}{n}|\Ric|^2-|\Ric|^2 \right)\le \frac{4(k-n)}{n}|\Ric|^2\le \frac{4(k-n)}{n^2}S^2
	.$$
Thus $\frac{\partial S}{\partial t}=-C S^2$ for some $C>0$ and hence $S(t)\to-\infty$ in finite time, contradicting the assumption that $\tg_t$ is ancient.
\end{proof}

\section{Ricci flow on Spheres}

We now study the Ricci flow of $\Sp(n+1)$-invariant metrics on spheres. Recall that metrics of the form (\ref{metrics}) are preserved under the Ricci flow. We can thus view $x,y,z,$ and $s$ as functions of time.

Recall also that the normalized Ricci flow on $\M^G_1$ is given by the ODE $$\frac{\partial	}{\partial t}\tg_t=-2\left(\Ric -\frac{S(\tg_t)}{\dim(M)}\tg_t\right)$$ and hence satisfies
	\begin{align}
	 x'&=-2x\left(r_i-\frac{S}{4n+3} \right)\label{riccifloweqns}\nonumber \\
   y'&=-2y\left(r_j-\frac{S}{4n+3} \right)\\
   z'&=-2z\left(r_j-\frac{S}{4n+3}\right)\nonumber \\
   s'&=-2s\left(r_h-\frac{S}{4n+3}\right)\nonumber, 
 \end{align}
 where $r_i,r_j,r_k,r_h$ and $S$ are as in (\ref{ric}) and (\ref{Scalar xyzs}).
Since the normalized Ricci flow preserves volume, we can restrict these ODE's to $\M^G_1$, the space of volume-1 metrics, i.e. those satisfying $xyzs^{4n}=1$.	
We parametrize these metrics by setting $s=\frac{1}{(xyz)^{\frac{1}{4n}}}$. Hence the normalized Ricci flow is equivalent to an ODE in $\mathbb{R}^3_{>0}$.
For later convenience, we include the formula for scalar curvature in the above coordinates:
\begin{equation}
S=\frac{4}{x}+\frac{4}{y}+\frac{4}{z}-\frac{2 z}{x y}-\frac{2 y}{x
   z}-\frac{2 x}{y z}+16n(n+2) (x y z)^{\frac{1}{4n}}-4 n (x+y+z) (x y
   z)^{\frac{1}{2n}}.
   \label{eqn:scalarxyz}
\end{equation}

	\begin{lem}
		The metrics where two of the variables agree, i.e. $x=y$, $y=z$, or $x=z$ are precisely those which are $\Sp(n+1)\times \text{U}(1)$-invariant, and the metrics where $x=y=z$ are precisely those which are $\Sp(n+1)\times\Sp(1)$-invariant, and hence these metrics are preserved by the Ricci flow. The metrics where $y=z=s$ are $\uU(2n+2)$-invariant, and hence also preserved by the Ricci flow.
		\label{U(1)}
	\end{lem}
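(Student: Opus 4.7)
The plan is to identify each enhanced isometry group as $\Sp(n+1)$ extended by a subgroup of $N(\Sp(n))/\Sp(n)=\uSO(3)$, work out how that subgroup acts on $\fp=\fp_0\oplus\fp_1$ via $\Ad$, and then read off which metrics of the form $(\ref{metrics})$ become invariant. Ricci-flow invariance will then follow from the preservation of isometries along the flow, since the Ricci tensor is diffeomorphism invariant.

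The diagonal $\Sp(1)\hookrightarrow\Sp(n+1)$, $q\mapsto\mathrm{diag}(q,I_n)$, realizes $N(\Sp(n))/\Sp(n)$ up to a central kernel. A direct computation shows that $\Ad(\mathrm{diag}(q,I_n))$ acts on $\fp_0=\Im\HH$ by $v\mapsto qvq^{-1}$, i.e.\ as the standard $\uSO(3)$-action on $\RR^3$, and acts on $\fp_1=\HH^n$ by $X\mapsto Xq^{-1}$, right multiplication by a unit quaternion. The latter always preserves the Euclidean inner product on $\HH^n$, so the $s\langle,\rangle|_{\HH^n}$ component of any metric in $(\ref{metrics})$ is automatically invariant; every remaining constraint then comes from the action on $\fp_0$.

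For the first two assertions, a $\uU(1)$ subgroup of this $\Sp(1)$ generated by one of $i,j,k$ acts on $\fp_0$ as an $\uSO(2)$-rotation fixing that coordinate axis and rotating the orthogonal plane, so $\sigma=x\langle,\rangle|_{(i)}+y\langle,\rangle|_{(j)}+z\langle,\rangle|_{(k)}$ is invariant iff the two eigenvalues on the rotated plane coincide, yielding exactly one of $y=z$, $x=z$, $x=y$. The full $\Sp(1)$ covers $\uSO(3)$, which is transitive on directions in $\fp_0$, so invariance under all of $\Sp(1)$ is equivalent to $\sigma$ being a scalar multiple of $Q|_{\fp_0}$, i.e.\ $x=y=z$.

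For the $\uU(2n+2)$ statement I would use the complex structure $J$ on $\HH^{n+1}=\CC^{2n+2}$ given by right multiplication by $i$, under which $\Sp(n+1)\subset\uU(2n+2)$ and $S^{4n+3}=\uU(2n+2)/\uU(2n+1)$. The vertical (Hopf) direction at $p$ is $Jp$, which corresponds to the $i$-axis in $\fp_0$, while the horizontal complement $\langle j,k\rangle\oplus\HH^n$ inherits from $J$ the structure of $\CC^{2n+1}$ on which $\uU(2n+1)$ acts via its standard irreducible complex representation. By Schur's lemma, any $\uU(2n+1)$-invariant real inner product on this $\CC^{2n+1}$ is a positive multiple of the real part of the standard Hermitian form, which in the coordinates of $(\ref{metrics})$ translates exactly to $y=z=s$, while the $i$-direction is unconstrained and gives the parameter $x$. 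The main delicate point is the explicit identification of the $\uU(2n+1)$-isotropy representation with the standard one; once that is made, the Ricci-flow invariance of each of the three conditions is immediate.
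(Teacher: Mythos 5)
Your argument is correct and follows essentially the same route as the paper: you reduce invariance under the enlarged groups to invariance of the inner product on $\fp_0$ under the (adjoint) $\uU(1)$- or $\uSO(3)$-action, and handle $\uU(2n+2)$ via the isotropy representation of $\uU(2n+1)$ on $(jp)\oplus(kp)\oplus\HH^n$ forcing a multiple of the Euclidean metric, then invoke diffeomorphism invariance of the flow. Realizing the extra $\Sp(1)$ through the normalizer $N(\Sp(n))$ and $\Ad(\mathrm{diag}(q,I_n))$ is just a repackaging of the paper's action $(g,k)\cdot p=gpk^{-1}$, so no substantive difference.
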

\begin{proof}
If $K\subset\Sp(1)$, we view the action of $\Sp(n+1)\times K$ on $S^{4n+3}$ as 
 $(g,k)\cdot p= g p k^{-1}$. Note that since $p=(1,0,...,0)$ is totally real, $kpk^{-1}=p$ for all $k\in K$. 
 Invariant metrics under this larger group can then be viewed as the subset of $G$-invariant metrics that are also invariant under the adjoint action of $K\subset\Sp(1)$ on $\fp_0=\mathfrak{sp}(1)$. If $K=\uU(1)=\{e^{i\theta}\}_{\theta\in[0,2\pi)}$ for example, then a metric is invariant if and only if rotation in the $j,k$ plane is an isometry, and hence if and only if $y=z$. If $K=\Sp(1)$ then a metric is invariant if and only if its restriction to $\mathfrak{sp}(1)$ is a multiple of the bi-invariant metric and hence if and only if $x=y=z$.
 
The action of $\uU(2n+2)$ on $S^{4n+3}$ is by isometries if and only if the adjoint action of the stabilizer $\uU(2n+1)$ acting on $\langle p,ip\rangle ^\perp=(jp)\oplus (kp)\oplus\mathbb{H}^n$ is by isometries. But this is the case if and only if the metric on $(jp)\oplus (kp)\oplus\mathbb{H}^n$ is a multiple of the Euclidean metric, i.e., if $y=z=s$.
\end{proof}
\begin{lem}
	The only fixed points of the normalized flow are the round metric, where $x=y=z=1$ and Jensen's second Einstein metric $x=y=z=(2n+3)^{-\frac{4n}{4n+3}}$. The round metric is a stable node
	  and Jensen's second Einstein metric is a saddle node.
	 The tangent space of the unstable manifold at Jensen's metric is given by $x=y=z$, and the tangent space of the stable manifold is given by $x+y+z=3(2n+3)^{-\frac{4n}{4n+3}}$. \label{local-behaviour}
\end{lem}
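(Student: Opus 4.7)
At any fixed point of the normalized flow on $\M^G_1$, the equations force $r_i=r_j=r_k=r_h=S/(4n+3)$, so the metric is Einstein. By Ziller's classification \cite{ziller1982homogeneous}, the only $\Sp(n+1)$-invariant Einstein metrics on $S^{4n+3}$ are, up to scaling, the round metric $g_{\mathrm{rd}}$ and Jensen's metric $g_{E_2}$, both with $x=y=z$; the volume-$1$ constraint $xyz\,s^{4n}=1$ then pins down the stated values.

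To analyze the linearization at these symmetric fixed points, I would exploit the $S_3$-symmetry of the system permuting $(x,y,z)$. This symmetry comes from the action of $N(\Sp(n))/\Sp(n)=\uSO(3)$ on $\fp_0$ permuting the axes $i,j,k$ discussed in Section $1$, which descends to diffeomorphisms of $S^{4n+3}$ and hence to symmetries of the Ricci flow on the space of metrics (\ref{metrics}). At $x=y=z$ the Jacobian therefore commutes with the induced $S_3$-action on $\RR^3$; decomposing $\RR^3$ into the trivial representation $\langle(1,1,1)\rangle$ and the absolutely irreducible standard representation carried by $\{u+v+w=0\}$, Schur's lemma forces the Jacobian to have a simple eigenvalue $\lambda_1$ with eigenvector $(1,1,1)$ and an eigenvalue $\lambda_2$ of multiplicity two with eigenspace $\{u+v+w=0\}$.

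To pin down the signs I would use two ingredients. First, the line $x=y=z$ is invariant under the flow (Lemma \ref{U(1)}) and along it the system reduces to the normalized flow on the two-parameter family of $\Sp(n+1)\Sp(1)$-invariant metrics studied in \cite{buzano2014ricci, bakas2012ancient}. The graph on the left of Figure \ref{fig: two-parameter families} has a local maximum at $g_{\mathrm{rd}}$ and a local minimum at $g_{E_2}$, yielding $\lambda_1<0$ at $g_{\mathrm{rd}}$ and $\lambda_1>0$ at $g_{E_2}$. Second, since the normalized flow is the $L^2$-gradient flow of $S$, the sign of $\lambda_2$ agrees with the sign of the Hessian of $S$ restricted to $\{u+v+w=0\}$ (Sylvester's law of inertia). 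I would compute this by Taylor expanding (\ref{eqn:scalarxyz}) to second order along the path $(x_0+\varepsilon,x_0-\varepsilon,x_0)$, using $s=(xyz)^{-1/(4n)}$ to preserve volume; the coefficient of $\varepsilon^2$ comes out strictly negative at both fixed points. This shows that $g_{\mathrm{rd}}$ is a stable node and $g_{E_2}$ is a saddle, and the stable manifold theorem then identifies the tangent spaces of the stable and unstable manifolds at $g_{E_2}$ with the affine plane $x+y+z=3(2n+3)^{-\frac{4n}{4n+3}}$ and the affine line $x=y=z$ respectively. The main obstacle is verifying that $\lambda_2<0$ at $g_{E_2}$, since the second-order expansion produces an algebraic expression involving $(2n+3)^{-\frac{4n}{4n+3}}$ that must be shown to remain negative for every $n\ge 1$.
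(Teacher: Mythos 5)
Your overall strategy is the same as the paper's: fixed points of the normalized flow are Einstein and hence, by Ziller's classification, the round metric and $g_{E_2}$; and the $S_3$-symmetry in $(x,y,z)$ forces the linearization at a point with $x=y=z$ to be of the circulant form with diagonal entry $a$ and off-diagonal entry $b$, with eigenvalue $a+2b$ on $(1,1,1)$ and a doubled eigenvalue $a-b$ on $\{u+v+w=0\}$. This is exactly the reduction in the paper's proof (your Schur-lemma phrasing is a cleaner justification of it), and your identification of the tangent spaces of the stable and unstable manifolds from the eigenspaces is correct, modulo one omission noted below.

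The genuine gap is that you never establish the signs, which is the entire content of the lemma. For the $(1,1,1)$-direction you appeal to Figure \ref{fig: two-parameter families} and to \cite{buzano2014ricci,bakas2012ancient}; a figure is not a proof, and what you actually need is that $g_{E_2}$ (resp.\ $g_{\mathrm{rd}}$) is a \emph{nondegenerate} local minimum (resp.\ maximum) of $S$ along the volume-normalized $\Sp(n+1)\Sp(1)$-family --- nondegeneracy is needed both for the strict sign of $a+2b$ and for the stable manifold to be exactly two-dimensional, and it is itself a computation (or at least a precise citation, not a picture). For the doubled eigenvalue you assert that the $\varepsilon^2$-coefficient of $S$ along $(x_0+\varepsilon,x_0-\varepsilon,x_0)$ is negative but then concede you have not verified this for all $n\ge 1$; that verification cannot be deferred, since it is precisely the claim that $g_{E_2}$ is a saddle with stable plane $x+y+z=3(2n+3)^{-\frac{4n}{4n+3}}$. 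The paper closes both points at once by simply computing $a$ and $b$: at $g_{E_2}$, $a=-8(2n^2+7n+5)(2n+3)^{-\frac{4n+6}{4n+3}}$ and $b=16(n+1)(n+2)(2n+3)^{-\frac{4n+6}{4n+3}}$, so $a-b=-8(n+1)(4n+9)(2n+3)^{-\frac{4n+6}{4n+3}}<0$ and $a+2b=8(n+1)(2n+3)^{-\frac{3}{4n+3}}>0$ with no delicate $n$-dependence, and at the round metric $a=-8(n+1)$, $b=0$; the feared ``algebraic obstacle'' evaporates once the closed form is in hand, but as written your argument stops exactly where the proof has to be done. Finally, your Sylvester/gradient-flow step needs one more remark to be airtight: the $L^2$ metric in the coordinates $(x,y,z)$ is itself $S_3$-invariant (the permutations are induced by diffeomorphisms), so it preserves the isotypic decomposition and acts by positive scalars on each piece; only then does the sign of the Jacobian eigenvalue on $\{u+v+w=0\}$ coincide with the sign of the Hessian of $S$ restricted to that plane.
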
	

\begin{proof}
By symmetry, the Hessian of	the system when $x=y=z$ must be of the form $$ \begin{bmatrix}
	a & b &  b\\
	b & a &  b\\
	b & b & a
\end{bmatrix},$$
which has eigenvectors $(1,1,1), (2,-1,-1),$ and $(-1,2,-1)$ with corresponding eigenvalues $a+2b$, $a-b$ and $a-b$. A direct calculation shows that at $x=y=z=(2n+3)^{-\frac{4n}{4n+3}} $,  
\begin{align*}
a&=-8  \left(2 n^2+7 n+5\right)(2 n+3)^{-\frac{4 n+6}{4 n+3}}\\
b&=16 (n+1) (n+2) (2 n+3)^{-\frac{4 n+6}{4 n+3}}.
\end{align*}

Thus $a-b<0$ and $a+2b>0$.

At the round metric, a direct calculation shows
\begin{align*}
a=-8 (1 + n)\quad\text{and}\quad
b=0,
\end{align*}
and hence the round metric is a stable node.

\end{proof}

By symmetry in the three variables, it suffices to understand the Ricci flow on the set $$\Omega=\{(x,y,z)\in\mathbb{R}^3:0< x\le y\le z\},$$ which is preserved by the Ricci flow since the boundary consists of invariant sets (see Lemma \ref{U(1)}). 

As in Section 1, there are two possibilities for the long time behavior of $\tg_t$. Either $\tg_t$ converges to an Einstein metric or $S(\tg_t)\to\infty$ as $t\to\tilde{T}_{\text{max}}<\infty$. If $S(\tg_t)\to\infty$, then since the Ricci flow preserves metrics of the form (\ref{metrics}), we can apply Theorem 4.6 in \cite{bohm2015long} (see also Remark 5.4 on p. 557), which in this case implies that $S(\tg_t)\tg_t$ converges to an isometric product $S^3\times\mathbb{R}^{4n}$ where $S^3$ and $\mathbb{R}^{4n}$ are endowed with the round metric and flat metric, respectively. We offer an elementary proof along with a monotonicity lemma that is useful for our classification of ancient solutions. 
\begin{thm}
Any $\Sp(n+1)$-invariant solution to the normalized Ricci flow on $S^{4n+3}$ either converges to the round metric, Jensen's second Einstein metric, or $S\to\infty$ in such a way that $S(\tg(t))\tg(t)$ converges in the pointed $C^\infty$ topology to $S^3\times\mathbb{R}^{4n}$, where $S^3$ and $\RR^{4n}$ are endowed with the round metric and flat metric respectively. Furthermore, in the last case, $x,y,z\to 0$ in such a way that $x/z$ and $y/z$ monotonically converge to $1$.  \label{forward flow limits}
\end{thm}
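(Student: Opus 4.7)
The plan is to combine the dichotomy of Theorem~\ref{thm: finite extinction} with a monotonicity lemma for the ratios $x/z$ and $y/z$ along the flow. By Lemma~\ref{U(1)} the loci $\{x=y\}$ and $\{y=z\}$ are preserved by the flow, so the region $\Omega=\{0<x\le y\le z\}$ is invariant; by the symmetry of (\ref{riccifloweqns}) under permutations of $x,y,z$, it suffices to restrict attention to $\Omega$. Theorem~\ref{thm: finite extinction} then gives two cases: either $\tilde T_{\max}=\infty$ and $\tg_t$ converges to a $G$-invariant Einstein metric, which by Lemma~\ref{local-behaviour} must be $g_{\mathrm{rd}}$ or $g_{E_2}$; or $\tilde T_{\max}<\infty$ and $S(\tg_t)\to\infty$.

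For the monotonicity, a direct computation from (\ref{ric}) gives
\begin{equation*}
r_i-r_k=4(x-z)\left[\frac{x+z-y}{xyz}+\frac{n}{s^2}\right],
\end{equation*}
and on $\Omega$ the bracket is nonnegative (using $z\ge y$ to see $x+z-y\ge x> 0$) while $x-z\le 0$, so $r_i-r_k\le 0$. From (\ref{riccifloweqns}) I get $(\log(x/z))'=-2(r_i-r_k)\ge 0$, so $x/z$ is monotonically nondecreasing; an analogous calculation with $r_j-r_k$ gives the same for $y/z$. Hence both ratios converge monotonically to limits $u_\infty,v_\infty\in(0,1]$.

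In the blow-up case, I would first argue that $z\to 0$: since $u=x/z$ and $v=y/z$ are bounded below by their initial values, a uniform upper bound on $z$ would confine $x,y$ and (via $s=(xyz)^{-1/(4n)}$) also $s$ to a compact subset of $(0,\infty)$, making $S$ bounded by (\ref{eqn:scalarxyz}), a contradiction. Hence $x,y,z\to 0$ and $s\to\infty$. To identify the pointed $C^\infty$-limit of $S(\tg(t))\tg(t)$ and force $u_\infty=v_\infty=1$, I would apply Theorem~4.6 of \cite{bohm2015long}: since the flow develops a Type-I singularity, the rescaled solution subconverges to a product $K/H\times\RR^{n-k}$ with $K/H$ a compact Einstein space of positive scalar curvature, for some intermediate subgroup $\Sp(n)\le K\le\Sp(n+1)$. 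The only intermediate subgroups (up to connected component) are $\Sp(n)\uU(1)$, which gives $K/H=S^1$ and is ruled out as a torus, and $\Sp(n)\Sp(1)$, which gives $K/H\cong S^3$ of dimension~$3$; hence the limit is $S^3\times\RR^{4n}$, with $S^3$ carrying its unique (up to scale) $\Sp(1)$-biinvariant Einstein metric, i.e.\ the round one, while $\RR^{4n}$ is flat since the sectional curvatures $K(i,T)=x/s^2$, etc., tend to $0$ after rescaling by $S$. Because the round metric on $S^3$ has all three $\fp_0$-eigenvalues equal, this forces $x/z,y/z\to 1$, and the monotonicity lemma guarantees the convergence is monotone.

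The main obstacle is the identification of the blow-up limit, which I would resolve by importing the general structure theorem of \cite{bohm2015long} and carrying out the small group-theoretic enumeration above. An alternative elementary route would avoid \cite{bohm2015long} by using the sharp ODE inequality $(\log(x/z))'\ge 8u(1-u)/z$ extracted from the bracket above, combined with an estimate showing $\int^{\tilde T_{\max}}\!dt/z(t)=\infty$ near the singular time (since the dominant $S^3$-fiber curvature forces $z$ to decrease at essentially a bounded rate); this would give $u_\infty=1$ directly, after which the product structure of the blow-up follows from the resulting asymptotic behavior of the eigenvalues in (\ref{ric}) and (\ref{eqn:scalarxyz}).
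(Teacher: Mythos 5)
Your monotonicity computation is exactly the paper's Lemma \ref{lem: non-increasing} (the same factorization of $r_i-r_k$, resp.\ $r_j-r_k$, giving $(\log(x/z))'=-2(r_i-r_k)\ge 0$ on $\Omega$), and your use of Theorem \ref{thm: finite extinction} plus Ziller's classification for the convergent case matches the paper. Where you genuinely diverge is the identification of the blow-up limit: you import Theorem 4.6 of \cite{bohm2015long}, enumerate the intermediate subgroups $\Sp(n)\subset K\subset \Sp(n+1)$, conclude $K/H=S^3$ round, and only then read off $x/z,\,y/z\to 1$. The paper acknowledges that this theorem applies here (subject to checking its hypotheses; it points to Remark 5.4 of that paper and to the fact that the diagonal form (\ref{metrics}) is preserved), but deliberately gives an elementary argument in the opposite order: the monotone ratios converge to positive limits, a L'H\^opital-type identity $\lim x/z=\lim x'/z'=\lim (x/z)(r_i/r_k)$ forces $r_i/r_k\to1$ and hence both ratios tend to $1$, then $S\cdot x\to 6$ and the product structure $S^3\times\RR^{4n}$ is built by hand from the Cheeger--Gromoll splitting theorem (lines from the expanding $\mathbb{HP}^n$ base, constant curvature on the remaining $3$-manifold, no short closed geodesics). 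Your route is shorter but less self-contained; the paper's route is elementary and its intermediate monotonicity statement is reused later for the classification of ancient solutions. Your passage from ``the limit fiber is round'' to ``$x/z,y/z\to1$'' also tacitly uses that smooth convergence of the rescaled homogeneous metrics forces convergence of the eigenvalue data; this is fine in this finite-dimensional invariant family, but it should be said, and the monotonicity is then what upgrades subsequential limits to genuine limits.

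There is, however, a genuine gap in your step ``$x,y,z\to 0$ and $s\to\infty$''. The claim that a uniform upper bound on $z$ would confine $x,y$ and $s$ to a compact subset of $(0,\infty)$ is false: the ratio bounds only give $x\ge u_0z$, $y\ge v_0z$, which is perfectly consistent with $z\to 0$ (indeed that is what happens), so no contradiction with $S\to\infty$ arises; and the conclusion you draw from it does not follow -- as written your argument would show $z$ is unbounded, the opposite of what you then assert. Two separate facts are needed. First, $z$ cannot tend to $\infty$ along any subsequence: by the ratio lower bound $x\ge u_0z\to\infty$, hence $xyz\to\infty$, and in (\ref{eqn:scalarxyz}) the term $-4n(x+y+z)(xyz)^{1/(2n)}$ then dominates $16n(n+2)(xyz)^{1/(4n)}$ and all other positive terms, forcing $S\to-\infty$ along that subsequence, contradicting $S\to\infty$. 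Second, $z$ cannot stay bounded away from $0$ along a subsequence: together with the upper bound just established and the ratio bounds, the triple $(x,y,z)$ (hence also $s=(xyz)^{-1/(4n)}$) would lie in a compact subset of $\RR^3_{>0}$ on which $S$ is bounded, again contradicting $S\to\infty$. With this repaired, and with the blow-up limit identified either by your citation route or by the paper's elementary one, the proof goes through. Your alternative ``elementary route'' at the end is only a sketch: the inequality $(\log(x/z))'\ge 8(1-x/z)/z$ does follow from your bracket, but the divergence $\int dt/z=\infty$ near the singular time is asserted, not proved.
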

\begin{proof}

The structure of our proof is as follows. First, we will show that the ratios $x/z$ and $y/z$ are monotonic along any solution of the normalized flow. Then we will see that $S\to \infty$ only if all the variables go to zero. Lastly, we will show that when all the variables tend to zero, their ratios tend to $1$.

\begin{lem}  \label{lem: non-increasing}
For any solution of the normalized Ricci flow in $\Omega$, the ratios $y/z$ and $x/z$ are non-decreasing. In particular, if $z\to 0$ as $t\to \tilde{T}_{\text{max}}$, then $x\to 0$ and $y\to 0$ as well.
\end{lem}
\begin{proof}
The derivative of $(y/z)$ is given by \begin{equation}
\left(\frac{y}{z}\right)'=\frac{y'z-z'y}{z^2}=(yz)\frac{-r_j+r_k}{z^2}.
\end{equation}
In particular, $(y/z)'$ has the same sign as 
\begin{equation}
r_k-r_i=\frac{-4 (y-z) \left(n (x y z)^{\frac{1}{2 n}+1}-x+y+z\right)}{x y
	z}.
\end{equation}
Since we assumed $x\le y\le z$, we see that $(y/z)'\ge 0$, with equality if and only if $y=z$. Similarly, $x/z$ is also non-decreasing.

\end{proof}

Assume from now on that $\tg_t$ does not converge to an Einstein metric, i.e., that $S(\tg_t)\to\infty$. Since $S$ is continuous on $\Omega$ the only way for $S\to \infty$ is if $\tg_t$ approaches the boundary of $\Omega$, that is if $x\to 0$, or if $z\to \infty$. 

If $z\to \infty$ and $x$ remains bounded away from zero, then the only way we can have $S\to \infty$ in (\ref{eqn:scalarxyz}) is if $(x y z)\to\infty$. However, if $(x y z)$ is sufficiently large then $(x y z)^\frac{1}{2n}>(x y z)^\frac{1}{4n}$, in which case the $-4nx(xyz)^\frac{1}{2n}$ term dominates all the positive terms. Thus, we can assume that $z\to 0$ and hence also that $x\to 0$ and $y\to 0$ as well by Lemma \ref{lem: non-increasing}.

Since the ratios $x/z$ and $y/z$ are less than or equal to $1$ and non-decreasing they each converge to some finite positive constant. Let $\lim_{t\to \tilde{T}_{\text{max}}}x/z=C$, $\lim_{t\to  \tilde{T}_{\text{max}}}y/z=D$. Since the ratio $x/z$ is scale invariant, the limit is the same for the Ricci flow and the normalized flow. Suppose for the moment that for the Ricci flow, $\lim_{t\to T_{\text{max}}}x'/z'$ and $ \lim_{t\to T_{\text{max}}}y'/z'$ 
exist, and hence that$$
\lim_{t\to T_{\text{max}}}\frac{x}{z}=\lim_{t\to T_{\text{max}}}\frac{x'}{z'}=\lim_{t\to T_{\text{max}}}\frac{x}{z}\frac{r_i}{r_k}
$$
Since $\lim_{t\to T_{\text{max}}}\frac{x}{z}$ is some positive constant, this implies $\lim_{t\to T_{\text{max}}}\frac{r_i}{r_k}=1$. The same reasoning implies $\lim_{t\to T_{\text{max}}}\frac{r_j}{r_k}=1$ as well. 
A quick computation shows that as $s\to\infty$ the ratio $r_i/r_k$ tends to $-(x + y - z)/(x-y-z)$,  and hence $(-x-y+z)/(x-y-z)\to 1$ as $t\to \tilde{T}_{\text{max}}$. From this and the corresponding limit for the other quotient it follows that $x/z\to 1$ and $y/z\to 1$, and hence also $x/y\to 1$.



Now we show the limits $\lim_{t\to T_{\text{max}}}x'/z'$ and $\lim_{t\to T_{\text{max}} }y'/z' $ exist. We remark, that for the Ricci flow as well $\lim_{t\to T_{\text{max}}}x/s=\lim_{t\to T_{\text{max}}}z/s=0$ since this is true for the normalized flow.

From (\ref{ric}),
\begin{align*}
xr_i&=2\left(\frac{x^2-y^2-z^2}{yz}\right)+4+4n\frac{x^2}{s^2}\to 2C^2 D^{-1} D-2D- 2D^{-1}+4\\
z r_k &=2\left(\frac{z^2-x^2-y^2}{xy}\right)+4+4n\frac{z^2}{s^2}\to
2C^{-1}D^{-1}-2CD^{-1}-2C^{-1}D+4,
\end{align*}
and hence $\lim_{t\to T_{\text{max}}}\frac{x'}{z'}=\lim_{t\to T_{\text{max}}}\frac{x}{z}\frac{r_i}{r_k}$ exists. The calculation for the other ratios is similar.

One can see from the formula (\ref{Scalar xyzs}) that $S(\tg_t)\to \infty$ at a rate of $6/x$ and hence the 
eigenvalues of $S(\tg_t)\tg_t$ tend to $x=y=z=6$ and $s=\infty$. It follows from the Cheeger-Gromoll Splitting theorem that the limit metric splits as an isometric product $S^3\times \mathbb{R}^{4n}$. To see this, notice that the limit has non-negative Ricci curvature since the eigenvalues $\bar{r}_i,\bar{r}_j,\bar{r}_k\to 2 $ and $\bar{r}_h\to 0$ where $\bar{r}_i,\bar{r}_j,\bar{r}_k,\bar{r}_h$ are the eigenvalues of the Ricci tensor of $S(\tg_t)\tg_t$. Moreover, from the fact that the projection $S^{4n+3}\to\mathbb{HP}^n$ is a Riemannian submersion and the scale $s$ of the $\mathbb{HP}^n$ base goes to infinity, it follows that in the limit, any geodesic with initial velocity $v\in\mathbb{H}^n=\mathbb{R}^{4n}$ is minimizing for all time, and hence we can split off a line for each $v\in\mathbb{R}^{4n}$. On the remaining 3-dimensional manifold $M^3$, the Ricci curvature is constant, and hence so is the sectional curvature. In particular, $M^3$ is covered by $S^3$. However, for $t$ near $\tilde{T}_{\text{max}}$, any geodesic tangent to the fiber is closed with length close to $12\pi$ since the eigenvalues $x,y,z$ are close to $6$ and the $S^3$ fibers are totally geodesic. Hence the limit has no short closed geodesics, and it follows that $M^3=S^3$.

\end{proof}

  By definition of the stable manifold, all metrics in it converge to $g_{E_2}$. Also recall that stable manifolds are always smooth manifolds (see e.g. \cite{brin2002introduction} p.122).
\begin{thm}
	The stable manifold  for the second Einstein metric separates the space of metrics into two connected components, namely into the set of metrics which converge to the round metric and the set of metrics where $S\to\infty$ under the normalized flow. \label{separate}
\end{thm}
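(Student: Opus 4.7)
The plan is to invoke Theorem \ref{forward flow limits} to obtain the decomposition $\M^G_1 = A \sqcup W^s(g_{E_2}) \sqcup B$, where $A = \{g \in \M^G_1 : \tg_t(g) \to g_{\text{rd}}\}$ and $B = \{g \in \M^G_1 : S(\tg_t(g)) \to \infty\}$, and then to show that $A$ and $B$ are each open and connected; this will identify them with the two connected components of $\M^G_1 \setminus W^s(g_{E_2})$.

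For openness I would rely on the fact that the normalized flow is the $L^2$-gradient ascent for $S$, so $S(\tg_t)$ is non-decreasing along every orbit; this forces $S \le S(g_{\text{rd}})$ on $A$ and $S \le S(g_{E_2})$ on $W^s(g_{E_2})$, and the strict inequality $S(g_{E_2}) < S(g_{\text{rd}})$ follows because the branch of $W^u(g_{E_2})$ contained in the line $x=y=z$ flows into $A$ with $S$ strictly increasing. Openness of $A$ is then standard: Lemma \ref{local-behaviour} supplies a small convex neighborhood $U$ of $g_{\text{rd}}$ contained in $A$ on which the linearization contracts; for $p \in A$ choose $T$ with $\tg_T(p) \in U$, and use continuous dependence on initial conditions to transfer this to a full neighborhood of $p$. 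For openness of $B$, given $p \in B$ pick $T < \tilde{T}_{\text{max}}(p)$ with $S(\tg_T(p)) > S(g_{\text{rd}})$; continuity of the flow and lower semi-continuity of $\tilde{T}_{\text{max}}$ produce a neighborhood of $p$ on which the same inequality holds at time $T$, and the bounds on $S$ over $A \cup W^s(g_{E_2})$ then place this neighborhood inside $B$.

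Connectedness is handled by a flow-retraction trick. For $A$, given $p,q \in A$ and a small convex ball $U \subset A$ around $g_{\text{rd}}$, choose $T$ with $\tg_T(p),\tg_T(q) \in U$ and concatenate the forward orbit from $p$ to $\tg_T(p)$, a straight segment inside $U$ from $\tg_T(p)$ to $\tg_T(q)$, and the forward orbit from $q$ to $\tg_T(q)$ traversed in reverse; flow-invariance of $A$ and $U \subset A$ ensure the whole path lies in $A$. For $B$ the same argument works once I produce a connected open subset $V_\epsilon \subset B$ that absorbs every $B$-orbit. Theorem \ref{forward flow limits} together with Lemma \ref{lem: non-increasing} imply that along any solution in $B$ we have $x,y,z \to 0$ and $x/z,\, y/z \to 1$, so (up to the $S_3$ relabeling of the variables) every $B$-orbit eventually enters
$V_\epsilon = \{(x,y,z) : x,y,z \in (0,\epsilon),\ |x/y-1|, |y/z-1|, |x/z-1| < \epsilon\}$,
which is a connected $S_3$-symmetric open set. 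The main obstacle is then verifying $V_\epsilon \subset B$ for $\epsilon$ sufficiently small, which I would do by a direct estimate from (\ref{eqn:scalarxyz}): when $x,y,z$ are all comparable and of size at most $\epsilon$, the dominant contribution to $S$ is $\tfrac{4}{x}+\tfrac{4}{y}+\tfrac{4}{z}-\tfrac{2z}{xy}-\tfrac{2y}{xz}-\tfrac{2x}{yz}\approx \tfrac{6}{z}$, while the remaining terms stay of order $\epsilon^{3/(4n)}$, so $S > S(g_{\text{rd}})$ throughout $V_\epsilon$ for $\epsilon$ small. Combined with the $S$-monotonicity and the $S$-bounds over $A \cup W^s(g_{E_2})$, this forces every orbit through $V_\epsilon$ to lie in $B$, and the flow-retraction argument then closes the proof.
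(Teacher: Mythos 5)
Your proposal is correct, but it follows a genuinely different route from the paper's. The paper proves openness of the blow-up set by invoking the compactness of the set of $G$-invariant Einstein metrics from \cite{bohm2004variational} to get a uniform bound $S_0$ on their scalar curvatures, applying continuous dependence, and then using Palais--Smale again (once $S$ exceeds $S_0$ the solution can no longer converge to an Einstein metric, so $S\to\infty$); it then establishes the separation by a path argument: along any path $\gamma$ from the basin of $g_{\text{rd}}$ to the blow-up set, the borderline parameter yields a metric that can lie neither in the open basin of the round metric nor in the open finite-extinction set, hence must lie on the stable manifold of $g_{E_2}$. You instead exploit the gradient structure quantitatively: monotonicity of $S$ gives $S\le S(g_{\text{rd}})$ on $A$ and $S\le S(g_{E_2})<S(g_{\text{rd}})$ on $W^s(g_{E_2})$ (the strict inequality you get from the unstable branch in $x=y=z$; it can also be checked directly from (\ref{eqn:scalarxyz})), so the superlevel set $\{S>S(g_{\text{rd}})\}$ lies in $B$, which yields openness of $B$, and you then prove outright that $A$ and $B$ are path-connected by flow retraction onto, respectively, a small ball around $g_{\text{rd}}$ (Lemma \ref{local-behaviour}) and the absorbing region $V_\epsilon$ near the diagonal, whose containment in $B$ follows from your $6/z$ estimate and whose absorption follows from Theorem \ref{forward flow limits} and Lemma \ref{lem: non-increasing}. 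What this buys is a more complete justification of the word ``components'' in the statement: the paper's written proof shows that $A$ and $B$ are open and cannot be joined by a path avoiding the stable manifold, but never verifies that each of $A$ and $B$ is itself connected, which your argument does; conversely, the paper's argument is shorter and does not rely on the case-specific asymptotics of Theorem \ref{forward flow limits}, only on the general Palais--Smale and finite-extinction machinery. Two small points to tidy in your write-up: verify that your explicit $V_\epsilon$ is connected (the straight-line path to the diagonal can push a pairwise ratio slightly beyond $\epsilon$ when the ordering of the coordinates is unfavourable, so either allow a $2\epsilon$ slack in the ambient set or rescale first), and state explicitly that the absorption of $B$-orbits into $V_\epsilon$ uses the $S_3$ relabeling, since Theorem \ref{forward flow limits} is stated on the ordered cone $x\le y\le z$.
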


\begin{proof}

	First we prove that the set of metrics in $\M^G_1$ with $S\to\infty$ under the normalized flow is open. Recall that the normalized flow is the $L^2$ gradient flow for $S$ on $\M^G_1$. By \cite{bohm2004variational}, the set of Einstein metrics in $\mathcal{M}_1^G$ is compact, and hence has bounded scalar curvature, say, by $S_0$. Now let $g(t)$ be a solution of the normalized flow with $S(g(t))\to\infty$. Then there exists a time $t_0$ such that $S(g(t_0))>S_0$. By continuous dependence on initial conditions, there is an open set $U$ around $g(0)$ so that for every metric $h\in U$ the solution $h(t)$ of the normalized flow with $h(0)=h$ satisfies $S(h(t_0))>S_0$.
	But by Palais-Smale, if the scalar curvature of a solution $h(t)$ surpasses $S_0$, then in fact $S(h(t))\to\infty$. 

	Now, recall that any solution $\tg_t$ either converges to an Einstein metric, or has $S(\tg_t)\to\infty$ in finite time. Let $\gamma(t)$ be a path in $\mathcal{M}_1^G$ with $\gamma(0)$ converging to the round metric and $\gamma(1)$ a metric with $S\to\infty$ under the normalized flow. For each $t\in[0,1]$ define $F(t)\in\mathbb{R}\cup\{\infty\}$ so that $[0,F(t))$ is the maximal interval of existence of the normalized flow with initial condition $\gamma(t)$. Note that $F(0)=\infty$ and $F(1)$ is finite by Theorem \ref{thm: finite extinction}. Let $t_0=\inf(\{t\in[0,1]\sst F(t)=\infty \})$. Then we claim $\gamma(t_0)$ must lie in the stable manifold of the second Einstein metric. On the one hand, $\gamma(t_0)$ cannot converge to the round metric, since, as an attractor, the set of metrics converging to the round metric is open. On the other hand, since the set of metrics with finite extinction time for the normalized flow is open, $F(\gamma(t_0))=\infty$ (finite extinction time is equivalent to $S(\tg_t)\to\infty$). In particular, the solution of the normalized flow with initial condition $\gamma(t_0)$ must converge to an Einstein metric which is not the round metric, and hence must converge to the second Einstein metric. 
\end{proof}

\section{Ancient Solutions}
We now turn to classifying the ancient solutions for the Ricci flow in $\M^G$. Recall that given an ancient solution $g_t$ there are two possibilities as $t\to-\infty$ for the corresponding normalized solution $\tg_t$ in $\M^G_1$. Either $\tg_t$ converges to an Einstein metric or $S(\tg_t)\to 0$ and $|\Ric^0(\tg_t)|\to 0$, i.e., $\tg_t$ is $0$-Palais Smale. In \cite{pediconi2019diverging}, Pediconi proved that a 0-Palais-Smale sequence asymptotically approaches a submersion metric for a homogeneous fibration $K/H\to G/H\to G/K$ where $K$ is some intermediate subgroup with $K/H$ is a torus. We will use this result together with our monotonicity results to argue that any such solution is actually a submersion metric \textit{for all time} with respect to the Hopf fibration $\uU(1)\to S^{4n+3}\to\CCPP^{2n+1}$. Besides these solutions there are two more ancient solutions which converge to $g_{E_2}$ as $t\to-\infty$. These arise by starting with the round metric and scaling the fibers and base of the Hopf fibration $S^3\to S^{4n+3}\to\mathbb{HP}^n$ (see Section 1).

By Lemma \ref{U(1)} we can assume, up to isometry, that our solutions $\tg_t$ satisfy $x\le y\le z$ for all $t$.

\begin{lem}
	Let $\tg_t$  be an ancient solution for the normalized flow with $S(\tg_t)\to 0$ as $t\to-\infty$.  Then for all $t\in (-\infty,T_{\text{max}}) $, $y=z$. In particular, $\tg_t$ is invariant under the larger group $\uU(1)\Sp(n+1)$.
\end{lem}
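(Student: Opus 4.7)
The strategy is to combine Pediconi's description of $0$-Palais-Smale sequences with the monotonicity established in Lemma \ref{lem: non-increasing}.

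First, I would verify that Pediconi's theorem applies to $\tg_t$ as $t\to-\infty$. Since $\tg_t$ is $0$-Palais-Smale, both $S(\tg_t)\to 0$ and $|\Ric^0(\tg_t)|\to 0$; hence $|\Ric(\tg_t)|\to 0$ and, by the Gap Theorem $|\Rm|\le C|\Ric|$ cited in Section $1$, also $|\Rm(\tg_t)|\to 0$. Moreover, the path $\{\tg_t\}$ is divergent in $\M^G_1$: any subsequential limit would satisfy $\Ric^0=0$ and $S=0$, i.e.\ would be Ricci-flat, which is impossible on $S^{4n+3}$ by Alekseevsky-Kimel'fel'd. Thus \cite{pediconi2019diverging} supplies a subsequence $t_i\to-\infty$ along which $\tg_{t_i}$ asymptotically approaches a submersion metric for an intermediate fibration $K/H\to G/H\to G/K$ with $K/H$ a non-trivial torus.

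Second, I would identify the fibration. The connected subgroups $\Sp(n)\subseteq K\subseteq\Sp(n+1)$ are $\Sp(n)$, $\Sp(n)\uU(1)$, $\Sp(n)\Sp(1)$, and $\Sp(n+1)$, and only $K=\Sp(n)\uU(1)$ has $K/H$ a non-trivial torus, giving the Hopf fibration $\uU(1)\to S^{4n+3}\to\CCPP^{2n+1}$ with fiber tangent to some unit $v\in\fp_0=\langle i,j,k\rangle$. The fiber eigenvalue---equal to $x$, $y$, or $z$ according as $v=i$, $j$, or $k$---is precisely the one that collapses relative to the horizontal eigenvalues as the fiber shrinks, so the standing ordering $x\le y\le z$ forces $v=i$. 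The submersion condition then requires the restriction of $\tg_{t_i}$ to $\mathfrak{p}_\mathfrak{k}^\perp=\langle j,k\rangle\oplus\HH^n$ to become asymptotically $\Ad(\uU(1)_i)$-invariant, and since $\Ad(\uU(1)_i)$ rotates the $(j,k)$-plane, this forces $y(t_i)/z(t_i)\to 1$.

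Third, I would upgrade this subsequential conclusion to the whole trajectory via monotonicity. By Lemma \ref{lem: non-increasing}, $y/z$ is non-decreasing along $\tg_t$, and bounded above by $1$ since $y\le z$. Hence $\lim_{t\to-\infty}y/z$ exists, and the subsequential value identifies it as $1$. The inequality
\[
1\;=\;\lim_{s\to-\infty}\frac{y(s)}{z(s)}\;\le\;\frac{y(t)}{z(t)}\;\le\;1
\]
then forces $y\equiv z$ on $(-\infty,T_{\max})$. By Lemma \ref{U(1)} this implies $\tg_t$ is invariant under $\Sp(n+1)\uU(1)$, as claimed.

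The main obstacle is securing the identification $v=i$: a priori the $\uU(1)$ fiber could be tangent to any direction in $\fp_0$, but the ordering $x\le y\le z$ together with the one-dimensionality of the torus fiber in Pediconi's theorem singles out the smallest eigenvalue. Degenerate limit configurations in which two or three of $x,y,z$ coincide remain consistent with $y=z$ and so cause no issue.
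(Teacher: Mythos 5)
Your proposal is correct and takes essentially the same route as the paper: show the trajectory diverges in $\M^G_1$, apply Pediconi's theorem to identify the toral intermediate subalgebra as $\fsp(n)\oplus(i)$ using the ordering $x\le y\le z$, deduce $y(t_i)/z(t_i)\to 1$ along a subsequence, and upgrade this to $y\equiv z$ via the monotonicity of $y/z$ from Lemma \ref{lem: non-increasing}, with the invariance statement from Lemma \ref{U(1)}. The only notable difference is in how the subsequential limit $y(t_i)/z(t_i)\to 1$ is certified: the paper is careful to invoke the condition $Q([j,k],i)\ne 0$ in Pediconi's Theorem 4.1, since convergence of the exponential directions ($v_2^{(\infty)}=v_3^{(\infty)}$) alone does not give ratio convergence when the stretching parameter diverges, whereas you phrase it as asymptotic $\Ad(\uU(1))$-invariance of the horizontal metric --- the same content, just resting implicitly on that stronger form of Pediconi's conclusion.
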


\begin{proof}
Since $\lim_{t\to-\infty}|\Ric^0(\tg_t)|=0$, it follows that for any sequence of times $t_i\to-\infty$ we must have $\tilde{g}(t_i)\to \infty$ in $\mathcal{M}^G_1$, otherwise there would exist a subsequence converging to a flat metric, contrary to our assumption.
	 
	Each metric in $\M^G_1$ can be written uniquely in the form $$g_{v,s}:=e^{sv_1}\langle,\rangle_{(i)}+e^{sv_2}\langle,\rangle_{(j)}+e^{sv_3}\langle,\rangle_{(k)}+e^{sv_4}\langle,\rangle_{\mathbb{H}^n}$$ where $v_1^2+v_2^2+v_3^2+v_4^2=1$ and $v_1+v_2+v_3+v_4=0$. 
	 	 
	Define the sequences $v^{(i)}\in S^3$, $s^{(i)}\in\RR$ by $\tilde g(t_i)=g_{v^{(i)},s^{(i)}}$. Then, since $S^3$ is compact, there exists a subsequence $v^{(i)}\to v^{(\infty)}$ and $s^{(i)}\to\infty$. By Theorem 4.1 in \cite{pediconi2019diverging}, $v^{(\infty)}$ is a so-called submersion direction for some toral $H$-subalgebra $\mathfrak{k}$, that is, a subalgebra $\mathfrak{k}=\text{Lie}(K)$ where $K$ is connected, $H\subset K\subset G$, and the quotient $K/H$ is a torus.  Moreover, $\mathfrak{k}\cap\mathfrak{p}$ is generated by the $\Ad(H)$-irreducible summands of $\mathfrak{p}$ corresponding to the most shrinking eigenvalue. By Proposition 3.10 in \cite{pediconi2019diverging}, if $v$ is a submersion direction for an $H$-subalgebra $\mathfrak{k}$, then $g_{v,s}$ is a $\mathfrak{k}$-submersion metric for all $s\in\RR$ and moving along the path $\gamma_v(s)=g_{v,s}$ is equivalent to shrinking the fibers of the homogeneous fibration $K/H\to G/H\to G/K$.
	
	In our case $v^{(\infty)}$ is a submersion direction for some toral $\Sp(n)$-subalgebra. On the other hand, the only toral subalgebras containing $\fsp(n)$ are isomorphic to $\fsp(n)\oplus \fu(1)$, where $\fu(1)$ is the Lie algebra of some circle subgroup of $\Sp(1)$ (the only Lie subgroups of $\Sp(n+1)$ containing $\Sp(n)$ are isomorphic to $\Sp(n)$, $\uU(1)\Sp(n)$ and $\Sp(1)\Sp(n)$). Since we assumed $x\le y \le z$, it follows that $x$ is the most shrinking eigenvalue, and hence $\mathfrak{k}=\mathfrak{sp}(n)\oplus(i)$.
	
	Let $(v_1^{(\infty)},v_2^{(\infty)},v_3^{(\infty)},v_4^{(\infty)})$ be the components of $v^{(\infty)}$. Since $\gamma_{v^{(\infty)}}(s)$ is invariant under the larger isometry group $\uU(1)\Sp(n+1)$ where $\uU(1)=\{e^{i\theta}\}_{\theta\in[0,2\pi)}\subset \Sp(1)$, we can conclude that $v_2^{(\infty)}=v_3^{(\infty)}$ and $v_1^{(\infty)}<0$.
	Moreover, since $Q([j,k],i)=1\ne 0$, Theorem 4.1 in \cite{pediconi2019diverging} further implies that $y(t_i)/z(t_i)\to 1$ as $t_i\to -\infty$. 	
	
		 On the other hand, by Lemma \ref{lem: non-increasing}, along the backwards flow $y/z$ is non-increasing. Hence $\lim_{t\to-\infty}y/z$ exists and equals $1$. But again, since $y/z$ is non-increasing and $y/z\le 1$, this is only possible if $y=z$ for all $t$.
		 \end{proof}

Hence for the purpose of classifying ancient solutions, it suffices to consider metrics in $\M^G_1 $ of the form \begin{equation}
\frac{1}{y^2s^{4n}}\langle,\rangle_{(i)}+y\langle,\rangle_{(j)}+y\langle,\rangle_{(k)}+s\langle,\rangle_{\mathbb{H}^n}. \label{ancient-form}
\end{equation}

Moreover, referring to the above proof, since $v_1^{(\infty)}<0$, we can assume that $\frac{1}{y^2 s^{4n}}\to 0$ for ancient solutions that do not converge to an Einstein metric as $t\to-\infty$. Notice that in this section our normalization differs from the one in Section 3. For metrics of the form (\ref{ancient-form}), the scalar curvature is given by \begin{equation}
S=\frac{16 n^2}{s}-\frac{8 n y}{s^2}-\frac{1}{s^{4n}y^2} \left(\frac{4
	n}{s^2}+\frac{2}{y^2}\right)+\frac{32 n}{s}+\frac{8}{y}.
\label{scalar-curvature-ancient-form}
\end{equation}

We prove the following classification result.
\begin{thm}
	Let $\tg_0=g_{x,y,z,s}$ with $x\le y\le z$. Then $\tg_t$ is ancient if and only if $x\le y=z\le s$. \label{ancient-classification}
\end{thm}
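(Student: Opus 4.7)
The plan is to prove both directions using the dichotomy from Section 2: any ancient normalized trajectory either converges to an Einstein metric as $t\to-\infty$ or is $0$-Palais--Smale.

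For the \emph{necessary} direction, suppose $\tilde g_t$ is ancient; by Theorem \ref{ancient for ricci implies ancient normalized} the normalized trajectory is ancient as well. If it converges to an Einstein metric at $t=-\infty$, the limit must be $g_{E_2}$ since the round metric is a stable node (Lemma \ref{local-behaviour}). Then $\tilde g_t$ lies on the 1-dimensional unstable manifold of $g_{E_2}$, whose tangent space is spanned by $(1,1,1)$ by Lemma \ref{local-behaviour}. The set $\{x=y=z\}$ is a 1-dimensional flow-invariant curve through $g_{E_2}$ tangent to $(1,1,1)$ (Lemma \ref{U(1)}), so the unstable manifold coincides with it. Parametrizing this family as $x=y=z=\lambda,\ s=\lambda^{-4n/3}$ shows both branches off of $g_{E_2}$ have $\lambda\leq 1$, equivalently $x=y=z\leq s$. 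In the $0$-Palais--Smale case, the preceding lemma forces $y=z$, and Lemma \ref{U(1)} makes $\{y=z=s\}$ a second invariant curve, dividing the invariant stratum $\{y=z\}$ into the flow-invariant regions $\{y<s\}$ and $\{y>s\}$. The submersion direction produced by Pediconi's theorem in the proof of the preceding lemma is the Hopf $\uU(1)$-fiber of $S^{4n+3}\to\mathbb{CP}^{2n+1}$, and the Palais--Smale condition forces the induced base limit to be an $\Sp(n+1)$-invariant Einstein metric on $\mathbb{CP}^{2n+1}$. Ziller's classification gives only $y=s$ (Fubini--Study) or $y/s=1/(n+1)$ (his second Einstein metric), both satisfying $y\leq s$. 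Flow-invariance of $\{y>s\}$ then forces $y\leq s$ at $t=0$.

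For the \emph{sufficient} direction, assume $x\leq y=z\leq s$ at $t=0$. Working inside the 2-dimensional normalized flow on the invariant set $\{y=z\}$, the region $R=\{x\leq y=z\leq s\}$ is bounded by the invariant curves $\{x=y=z\}$ and $\{y=z=s\}$ (Lemma \ref{U(1)}) and is therefore backward-invariant. Since the normalized flow is the $L^2$-gradient flow of $S$, the scalar curvature $S(\tilde g_t)$ is non-increasing under the backward flow; by Lafuente's theorem a finite backward singular time would require $S\to-\infty$, so ancientness reduces to showing $S$ remains non-negative along the backward orbit. A phase-portrait analysis in $R$, using the sign of $r_j-r_h$ at and near the invariant curve $\{y=z=s\}$ (where it vanishes) together with the monotonicity of $x/y$ inherited from Lemma \ref{lem: non-increasing}, shows the backward orbit must either approach $g_{E_2}$ or asymptote into the $0$-Palais--Smale regime $x\to 0,\ s\to\infty,\ y/s\to 1/(n+1)$. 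In either case $S\geq 0$ along the backward orbit, confirming that $\tilde g_t$ is ancient.

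The principal obstacle is the sufficient direction: verifying backward-completeness of orbits inside $R$. Since the boundary curve $\{y=z=s\}$ carries its own ancient solution, a Berger-type shrinker that acts as a separatrix for the 2D phase portrait, nearby orbits could a priori develop finite-time backward singularities or cross into regions where $S<0$. Ruling this out requires a careful sign analysis of $r_j-r_h$ throughout $R$ together with B\"{o}hm--Wang--Ziller's compactness of the Einstein moduli space to pin down the asymptotic behaviour as $t\to-\infty$.
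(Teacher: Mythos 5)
Your overall strategy (the Einstein-limit vs.\ $0$-Palais--Smale dichotomy, the reduction to $y=z$ via the preceding lemma, and the invariant strata $\{x=y=z\}$, $\{y=z=s\}$) is the same as the paper's, but both directions have genuine gaps. In the necessity direction, the step ``flow-invariance of $\{y>s\}$ then forces $y\le s$ at $t=0$'' is a non sequitur: backward-invariance of $\{y>s\}$ is perfectly compatible with $y/s\to 1^{+}$ as $t\to-\infty$, i.e.\ with a hypothetical ancient solution whose base limit is Fubini--Study approached from above, so no contradiction has been derived. What closes this in the paper is a computation you do not make: $r_j-r_h$ factors as in (\ref{rj-rh}), so whenever $y>s$ the ratio $y/s$ is \emph{strictly increasing} under the backward flow and hence cannot converge to $1$; combined with the boundedness of $y/s$ for ancient solutions (which itself requires the estimate that $S<0$ once $y/s$ is large, using $S\ge 0$ for ancient solutions --- also missing from your argument) and Lemma \ref{possible limits}, one gets that $y/s$ would converge monotonically to a limit $>1$, contradicting the fact that the only possible nonzero limits are $1$ and $\tfrac1{1+n}$. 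Note also that your assertion that the Palais--Smale condition ``forces the induced base limit to be an Einstein metric on $\CCPP^{2n+1}$'' is exactly the nontrivial point: the paper does not deduce it softly from Ziller's classification but proves Lemma \ref{possible limits} by computing $\lim_{t\to-\infty} y'/s'$ from (\ref{ric}) and solving $C=\tfrac{4+4nC^2}{8+4n-4C}$.

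In the sufficiency direction you have only a sketch: ``a phase-portrait analysis in $R$ \dots shows the backward orbit must either approach $g_{E_2}$ or asymptote into the $0$-Palais--Smale regime,'' and you yourself flag this as the principal obstacle --- but this is precisely the content of Lemma \ref{they are ancient} and cannot be left as a gesture. The paper's argument is concrete: backward-invariance of $\{\tfrac1{y^2s^{4n}}\le y\le s\}$, then an elementary case analysis using $y\le s$ and $y^3s^{4n}\ge 1$ showing that any backward degeneration forces $s\to\infty$, then the sign of (\ref{(y/s)' backwards showing its ancient}) for large $s$ (positive for $y/s<\tfrac1{1+n}$, negative for $y/s\ge\tfrac1{1+n}$) to conclude $y/s\to\tfrac1{1+n}$, and finally $S\to 0$ from (\ref{scalar-curvature-ancient-form}), so $S$ is bounded below and ancientness follows from the gradient-flow estimate (\ref{gradientflow}); your appeal to Lafuente's theorem is misplaced here, since that result concerns the unnormalized flow, and B\"ohm--Wang--Ziller compactness plays no role at this step. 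Two smaller points: in the Einstein-limit case the volume-one parametrization of $\{x=y=z\}$ is $s=\lambda^{-3/(4n)}$, not $\lambda^{-4n/3}$, and the constant solution at the round metric (which is ancient and satisfies $x=y=z=s$) should be mentioned when you argue the backward Einstein limit ``must be $g_{E_2}$.''
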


Note that metrics with $y=\frac{1}{y^2 s^{4n}}$ are precisely the ones invariant under the larger group of isometries $\Sp(1)\Sp(n+1)$, and hence these converge to $g_{E_2}$ as $t\to-\infty$. Metrics with $y=s$ are invariant under the group $\uU(2n+2)$ by Lemma \ref{U(1)} and are hence also preserved. These two solutions were shown to be ancient in \cite{bakas2012ancient}.
We begin with a lemma.
\begin{lem}
	 For any ancient solution with $\lim_{t\to-\infty}S(\tg_t)=0$, the ratio $y/s$ remains bounded as $t\to-\infty$. Moreover, if $\lim_{t\to-\infty}y/s$ exists and is non-zero, then the only possibilities are $\lim_{t\to-\infty}y/s=1$ or $\frac{1}{1+n}$.
 \label{possible limits}
\end{lem}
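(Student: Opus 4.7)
The plan is to combine positivity of $S$ along ancient solutions with a direct analysis of the flow equation for the ratio $u:=y/s$. By Theorem~\ref{ancient for ricci implies ancient normalized}, the hypothesis $S(\tg_t)\to 0$ forces $S(\tg_t)>0$ and $|\Ric^0(\tg_t)|\to 0$ as $t\to -\infty$. Using $|\Ric^0|^2 = r_i^2 + 2r_j^2 + 4n r_h^2 - S^2/(4n+3)$ together with $S\to 0$, each Ricci eigenvalue tends to zero. Specializing (\ref{ric}) to $y=z$ and $x = 1/(y^2s^{4n})$, the two leading terms of $r_j\approx 4/y + 4ny/s^2$ are both positive, so $r_j\to 0$ forces $y\to\infty$ and $y/s^2\to 0$; then $r_h\to 0$ rules out $s$ staying bounded. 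Consequently $s\to\infty$ and $x\to 0$ as $t\to -\infty$, making the approximations below valid.

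For the boundedness statement, substituting $y=z$ into (\ref{Scalar xyzs})---the two copies of $4/x$ cancel---gives
\begin{equation*}
S = \frac{8}{s}\!\left(\frac{1}{u} + 2n(n+2) - nu\right) - \frac{2x}{y^2},
\end{equation*}
where the last term is negligible compared to $8/s$ because $x/y^2 = 1/(y^4 s^{4n})$. As $u\to\infty$ the parenthesised expression tends to $-\infty$, so $S>0$ forces $u$ to stay below the positive root $u^\ast$ of $1/u + 2n(n+2) - nu = 0$; hence $\limsup_{t\to -\infty} u \le u^\ast < \infty$.

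For the classification of non-zero limits, the key identity---obtained from (\ref{riccifloweqns}) since the normalization terms cancel in $y'/y - s'/s$---is $(\log u)' = -2(r_j - r_h)$. A short calculation with (\ref{ric}) at $y=z$ yields
\begin{equation*}
r_j - r_h = \frac{4}{us}\bigl((n+1)u - 1\bigr)(u - 1) + O\!\left(\tfrac{x}{y^2} + \tfrac{x}{s^2}\right),
\end{equation*}
whose leading zero set on $(0,\infty)$ is exactly $\{1/(n+1), 1\}$---precisely the two $\Sp(n+1)$-invariant Einstein values on $\CCPP^{2n+1}$. Assume $u\to u_\infty \in (0,\infty)$. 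Then $\int_{-\infty}^{0}(r_j - r_h)\,dt = \tfrac12(\log u(0) - \log u_\infty)$ is finite. A parallel analysis of $s'$ via (\ref{riccifloweqns}) gives $s' = -8A(u) + o(1)$ with $A(u) = [3(n+2) - 2/u - (2n+3)u]/(4n+3)$, and one checks $A(1) = (n+1)/(4n+3) > 0$ and $A(1/(n+1)) = (n^2+3n+1)/((n+1)(4n+3)) > 0$. Since $s\to\infty$ backwards we must have $A(u_\infty)\ge 0$, so $s(t)$ grows at most linearly in $|t|$, giving $1/s(t) \ge c/|t|$ for $t$ sufficiently negative. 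If $u_\infty \notin \{1/(n+1), 1\}$, then $((n+1)u_\infty - 1)(u_\infty - 1) \ne 0$ and $|r_j - r_h| \ge c'/|t|$, contradicting finiteness of $\int_{-\infty}^{0}(r_j - r_h)\,dt$. Therefore $u_\infty \in \{1/(n+1), 1\}$.

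The main obstacle is making this integrability argument uniform: one has to verify $A(u_\infty) \ge 0$ (so that $s$ really grows rather than collapses backwards), handle the borderline case $A(u_\infty)=0$ where $s$ grows sublinearly but $1/s$ remains non-integrable on $(-\infty,0]$, and control the $O(x/y^2 + x/s^2)$ error in $r_j - r_h$ against the leading $4/(us)$. The matching of the admissible limits $1$ and $1/(n+1)$ with exactly the two $\Sp(n+1)$-invariant Einstein metrics on $\CCPP^{2n+1}$ is what makes the statement geometrically meaningful for the ancient classification.
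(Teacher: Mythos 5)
Your proposal is correct in substance; the first half matches the paper and the second half takes a genuinely different route. The boundedness of $y/s$ is argued exactly as in the paper: positivity of $S$ along ancient solutions against an upper bound for (\ref{scalar-curvature-ancient-form}) of the form $\tfrac{8}{s}(\,\cdot\,)$ (your displayed formula for $S$ should be an inequality, since you also dropped the negative term $-4nx/s^2$; this does not affect the sign argument, and your preliminary derivation of $y,s\to\infty$ from decay of the Ricci eigenvalues is fine once you note $x\le y$ gives $r_j\ge 2/y+4ny/s^2$). For the classification of nonzero limits, the paper instead works with the unnormalized flow and a L'H\^opital-type identity: since $y,s\to\infty$ and $\lim y'/s'=\lim (y/s)(r_j/r_h)$ exists, the limit $C$ must solve $C=(4+4nC^2)/(8+4n-4C)$, i.e.\ $(n+1)C^2-(n+2)C+1=0$, giving $C\in\{1,1/(n+1)\}$. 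You instead integrate $(\log(y/s))'=-2(r_j-r_h)$, expand $r_j-r_h=\tfrac{4}{us}\bigl((n+1)u-1\bigr)(u-1)+O(x/y^2+x/s^2)$ (a correct factorization; it is the asymptotic form of the paper's exact identity (\ref{rj-rh})), and rule out $u_\infty\notin\{1,1/(n+1)\}$ by an integrability argument; your formula for $A(u)$ and the values $A(1)$, $A(1/(n+1))$ check out. The ``obstacles'' you list at the end are lighter than you fear: you never need $A(u_\infty)\ge 0$ nor the borderline case, because $s'=-8A(u)+o(1)$ with $u\to u_\infty$ finite already makes $|s'|$ bounded, hence $s(t)\le s(0)+C|t|$ and $1/s\ge c/|t|$ for $t$ very negative; and the error term is harmless since $s\cdot O(x/y^2+x/s^2)\to 0$ (indeed $sx/y^2=1/(y^4s^{4n-1})$ and $x/s\to 0$ once $y\asymp u_\infty s\to\infty$), so $r_j-r_h$ is eventually sign-definite of size at least $c/s\ge c'/|t|$, contradicting convergence of $\int_{-\infty}^0(r_j-r_h)\,dt$. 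With those two observations written out your argument is complete; the paper's ratio argument is shorter, while yours makes explicit both the mechanism (the zero set of $((n+1)u-1)(u-1)$, matching the two Einstein metrics on $\CCPP^{2n+1}$) and a quantitative decay rate for $|r_j-r_h|$.
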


\begin{proof}

We can bound (\ref{scalar-curvature-ancient-form}) above by 
\begin{equation}
S<\frac{8n}{s}\left(2n+4 -\frac{y}{s}\right)+\frac{8}{y}
\label{scalar-curvature-ancient-form-2}
\end{equation}

If $y/s>M$ then we can bound (\ref{scalar-curvature-ancient-form-2}) above by
$$
\left(8n(2n+4-M)+\frac{8}{M}\right)\frac{1}{s}
$$
which is negative if $M$ is sufficiently large. But ancient solutions of the Ricci flow (and hence also of the normalized flow) have non-negative scalar curvature, and hence $y/s$ must be bounded.

Now we examine the possible limits for $y/s$ as $t\to-\infty$. Since $\frac{1}{y^2 s^{4n}}\to 0$ and $y/s$ is bounded as $t\to-\infty$, $s\to\infty$ as well.

Suppose that $\lim_{t\to-\infty} \frac{y}{s}=C>0$. Then since $s\to\infty$, $y\to\infty$ as well. For the Ricci flow, 
$$
\lim_{t\to-\infty}\frac{y'}{s'}=\lim_{t\to-\infty}\frac{y}{s}\frac{r_j}{r_h}.
	$$
From (\ref{ric}) we have
	\begin{equation*}
 y r_j=-\frac{2x}{y}+4+\frac{4ny^2}{s^2}
\quad\text{        and        }\quad s r_h=8+4n-\frac{2x}{s}-\frac{4y}{s}.
\end{equation*}

Since $x/y\to 0$ and $x/s\to 0$ under the normalized flow, the same is true for the Ricci flow. Thus, since $y,s\to\infty$ and since we assumed $\lim_{t\to-\infty}\frac{y}{s}=C$, both of  the above quantities tend to finite limits, and hence $\lim_{t\to-\infty}\frac{y'}{s'}$ exists. But then also
$$
C=\lim_{t\to-\infty}\frac{y}{s}=\lim_{t\to-\infty}\frac{y'}{s'}=\frac{4+4nC^2}{8+4n-4C}
.$$
Solving the above equation yields $C=1$ or $C=\frac{1}{1+n}$. Since the ratio $y/s$ is scale-invariant, the same holds for the normalized flow.

\end{proof}

Note that these two ratios correspond to the two homogeneous Einstein metrics $g^{\text{FS}}_{\mathbb{CP}^{2n+1}}$ and $g^{2}_{\mathbb{CP}^{2n+1}}$ on the base $\CCPP^{2n+1}$ (see Section $1$ and \cite{ziller1982homogeneous}). 
\begin{lem}
Solutions with $y/s>1$ are not ancient.
\end{lem}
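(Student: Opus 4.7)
The plan is to prove the contrapositive by showing that the ratio $y/s$ is strictly monotonically decreasing along the forward normalized Ricci flow throughout the open region $\{y>s\}$. Combined with the short list of admissible limiting values for $y/s$ on ancient solutions, this will rule out the extension of any such solution to $t=-\infty$.

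The first step is to compute $(y/s)'$. Since the normalized flow satisfies $y'=-2y(r_j-S/(4n+3))$ and $s'=-2s(r_h-S/(4n+3))$, the normalization terms cancel and one obtains $(y/s)'=2(y/s)(r_h-r_j)$. Using $y=z$, which holds for ancient solutions by the preceding lemma, the formulas in (\ref{ric}) specialize to $r_j=-\tfrac{2x}{y^2}+\tfrac{4}{y}+\tfrac{4ny}{s^2}$ and $r_h=-\tfrac{2(x+2y)}{s^2}+\tfrac{4(n+2)}{s}$. A short rearrangement, using the identity $(n+1)y^2-(n+2)sy+s^2=((n+1)y-s)(y-s)$, factors the difference as
$$r_h-r_j \;=\; -\,\frac{y-s}{y\,s^2}\left(\frac{2x(y+s)}{y}+4\bigl((n+1)y-s\bigr)\right).$$
When $y>s$ both the factor $(y-s)$ and the bracketed expression are strictly positive (the latter because $(n+1)y>y>s$ and $x>0$), hence $r_h-r_j<0$. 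Thus $y/s$ strictly decreases in forward time on $\{y>s\}$, and since the locus $\{y=s\}$ is invariant by Lemma \ref{U(1)}, the region $\{y>s\}$ is preserved in both time directions.

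Now suppose for contradiction that $\tg_t$ is an ancient solution with $y(0)/s(0)>1$. Invariance of $\{y>s\}$ gives $y(t)/s(t)>1$ for every $t\le 0$, and the monotonicity above forces $y(t)/s(t)$ to be strictly increasing as $t\to-\infty$. By Theorem \ref{ancient for ricci implies ancient normalized}, either $\tg_t$ converges to an Einstein metric as $t\to-\infty$, in which case $\lim y/s\in\{1,\,1/(2n+3)\}$ (the values at $g_{\mathrm{rd}}$ and $g_{E_2}$ in our volume-$1$ normalization), or $S(\tg_t)\to 0$, in which case Lemma \ref{possible limits} forces $y/s$ to remain bounded with any nonzero limit lying in $\{1,\,1/(n+1)\}$. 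Monotonicity together with boundedness implies $L:=\lim_{t\to-\infty} y(t)/s(t)$ exists and satisfies $L\ge y(0)/s(0)>1$; but every admissible $L$ lies in $(0,1]$, a contradiction. The main obstacle is spotting the clean factorization of $r_h-r_j$; once that is in hand, the sign analysis and the dichotomy on backward limits combine to finish the proof.
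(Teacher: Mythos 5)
Your proof is correct and follows essentially the same route as the paper: you show $r_j-r_h$ (equivalently $r_h-r_j$) has a fixed sign on $\{y>s\}$ via the same factorization (the paper writes it with $x=\tfrac{1}{y^2s^{4n}}$ substituted, you keep $x>0$ general), conclude $y/s$ increases along the backwards flow, and contradict the boundedness and limit classification of Lemma \ref{possible limits}. Your explicit treatment of the case where the backward limit is an Einstein metric is a minor extra care, not a different argument.
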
	
\begin{proof}
	We will show that if $y/s>1$ then $(y/s)'>0$ under the backwards flow. But since $y/s$ is bounded for any ancient solution, $y/s$ would converge to a finite limit greater than $1$, which would contradict the previous lemma.

	The derivative of $y/s$ under the backwards flow is $$
	(y s)\frac{r_j-r_h}{s^2},
	$$
	which has the same sign as \begin{align}r_j-r_h&=-\frac{8+4n}{s}-\frac{2}{y^4 s^{4 n}}+\frac{2}{y^2 s^{2+4n}}+\frac{4}{y}+\frac{(4+4n) y}{s^2}\\
	&=\frac{2 (y-s) \left(s+y+2 s^{4 n} y^3 ((1+n)y-s)\right)}{y^4 s^{2+4n}}. \label{rj-rh}	\end{align}

	which is positive since $y>s$.
\end{proof}

Now to conclude the proof of Theorem \ref{ancient-classification} we only need to show that the remaining solutions are ancient.
\begin{lem}
Solutions satisfying $\frac{1}{y^2 s^{4n}}\le y\le s$ are ancient for the normalized flow. Furthermore, along such a solution $y,s\to\infty$ and either $y=s$ or $y/s\to \frac{1}{1+n}$.
 \label{they are ancient}
\end{lem}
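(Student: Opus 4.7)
The plan is to establish three properties of solutions starting in $R=\{x\le y=z\le s\}\cap\M^G_1$: (a) $R$ is invariant under the normalized flow, (b) the backward flow extends for all $t\le 0$, and (c) $y/s$ converges as $t\to-\infty$ to $1$ on the invariant sublocus $y=s$ and to $1/(n+1)$ otherwise. Invariance in (a) follows directly from Lemma~\ref{U(1)}, as the two boundary components of $R$ in the $y=z$ slice are the $\uU(2n+2)$-invariant locus $y=z=s$ and the $\Sp(1)\Sp(n+1)$-invariant locus $x=y=z$, both preserved by the Ricci flow.

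For (b), I use Theorem~\ref{ancient for ricci implies ancient normalized} to reduce to showing that the corresponding unnormalized Ricci flow is ancient, which by Lafuente's theorem amounts to ruling out $S(g_t)\to-\infty$ in finite backward time. On $R$ the constraint $y=z$ causes the pair of potentially singular terms $4/x-2(y^2+z^2)/(xyz)$ in (\ref{Scalar xyzs}) to cancel identically, reducing the scalar curvature to
\[
S=\frac{8}{y}+\frac{16n(n+2)}{s}-\frac{4n(x+2y)}{s^2}-\frac{2x}{y^2}.
\]
Using $x\le y\le s$, I can show the negative contributions here are controlled by the positive ones, so $S$ remains bounded below on any precompact subset of the $(y,s)$ phase space. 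A compactness argument then precludes the backward trajectory from escaping to the boundary $y=0$ or $s=0$ in finite time, giving $\tilde T_{\min}=-\infty$.

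For (c), I analyze $y/s$ using (\ref{rj-rh}). Under the backward flow $(y/s)'/(y/s)=2(r_j-r_h)$, and $r_j-r_h$ has the sign of $(y-s)\bigl(s+y+2s^{4n}y^3((n+1)y-s)\bigr)$. On $R$ the first factor is non-positive, and on the subregion $(n+1)y\ge s$ the second factor is manifestly positive, so $y/s$ is non-increasing under the backward flow in this subregion. Lemma~\ref{possible limits} bounds $y/s$ and identifies $1$ and $1/(n+1)$ as the only possible positive limits. The limit $1$ is realized only on the invariant locus $y=s$, which is known to be ancient from \cite{bakas2012ancient}. Otherwise $y/s$ is strictly less than $1$ and decreases under the backward flow toward $1/(n+1)$; a sign analysis on the subregion $(n+1)y<s$ confirms $1/(n+1)$ as the relevant attractor and rules out the degenerate limit $y/s\to 0$. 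Once convergence of $y/s$ is established, the fact that the trajectory does not converge to an Einstein metric forces $S(\tg_t)\to 0$ by the Palais--Smale compactness condition, and then the volume-one constraint with $y/s$ bounded away from $0$ forces $y,s\to\infty$.

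The principal obstacle will be the sign analysis of $r_j-r_h$ in the subregion $(n+1)y<s$, where the inner factor $s+y+2s^{4n}y^3((n+1)y-s)$ is not manifestly signed and the high-order negative term $-2s^{4n+1}y^3$ can compete with the other contributions; controlling it will require a careful use of the bound on $y/s$ together with the gradient flow structure.
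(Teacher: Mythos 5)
Your reduction of the scalar curvature on the slice $y=z$ is correct and, pushed slightly further, gives something stronger than you claim: with $x\le y\le s$ one has $\frac{8}{y}-\frac{2x}{y^2}\ge\frac{6}{y}$ and $\frac{16n(n+2)}{s}-\frac{4n(x+2y)}{s^2}\ge\frac{16n^2+20n}{s}$, so $S>0$ on \emph{all} of the region $\{x\le y=z\le s\}$, not merely ``bounded below on precompact subsets.'' Since the region is invariant in both time directions (its walls $x=y=z$ and $y=z=s$ are invariant by Lemma \ref{U(1)}), $S$ stays positive along the backward trajectory, and then either the criterion from Section 1 based on (\ref{gradientflow}) (``$S$ bounded below implies $\tilde{T}_{\text{min}}=-\infty$'') or Lafuente's theorem combined with Theorem \ref{ancient for ricci implies ancient normalized} yields ancientness. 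This is a genuinely different and simpler route to the first half of the lemma than the paper's, which obtains a lower bound on $S$ only at the very end, after proving $y/s\to\frac{1}{1+n}$ and hence $S\to 0$. However, the mechanism you actually wrote down is not valid: a ``compactness argument'' does not preclude an ODE trajectory from reaching $y=0$, $s=0$, or infinity in finite backward time; without the global bound $S>0$ fed into one of the two criteria above, nothing in your part (b) controls the backward existence time.

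The genuine gap is in the ``furthermore'' half, and you have in effect flagged it yourself without resolving it. The convergence $y/s\to\frac{1}{1+n}$ hinges on the sign of $s+y+2s^{4n}y^{3}((n+1)y-s)$ in the subregion $(n+1)y<s$, where for small $s$ the term $s+y$ dominates and $y/s$ keeps \emph{decreasing} under the backward flow, so a priori $y/s\to 0$ is not excluded (Lemma \ref{possible limits} only restricts nonzero limits). The missing idea is the paper's preliminary step: show $s\to\infty$ along the backward flow directly from the defining inequalities of the region, namely $y\le s$ and $x\le y$ (equivalently $y^{3}s^{4n}\ge 1$), which rule out $s\to 0$ and force $s\to\infty$ in every degeneration scenario; once $s$ is large, at any fixed ratio the term $2s^{4n}y^{3}(s-(n+1)y)$, of order $s^{4n+4}$, dominates $s+y$, so the ratio is driven toward $\frac{1}{1+n}$ from both sides and Lemma \ref{possible limits} pins the limit. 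Your proposed ordering is circular: you derive $y,s\to\infty$ only \emph{after} convergence of $y/s$, via ``the trajectory does not converge to an Einstein metric, hence $S\to 0$,'' but the convergence of $y/s$ is exactly what requires largeness of $s$, and the non-convergence to an Einstein metric is itself unjustified for interior trajectories (Jensen's metric lies on the wall $x=y=z$ of your region and is a saddle; excluding backward convergence to it requires identifying its unstable manifold with the curve $x=y=z$). Finally, ``the volume-one constraint with $y/s$ bounded away from $0$ forces $y,s\to\infty$'' is not a valid implication on its own; what does work is the bound $S\ge\frac{6}{y}$ above, so that $S\to 0$ forces $y\to\infty$ and then $s\ge y\to\infty$.
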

\begin{proof}
We already saw that metrics satisfying $y=\frac{1}{y^2s^{4n}}$ or $y=s$ are preserved by the Ricci flow and are ancient. Hence, solutions which begin in the set $\Gamma=\{\frac{1}{y^2s^{4n}}\le y\le s\}$ remain in that set, and we can assume from now on $\frac{1}{y^2 s^{4n}}<y<s$.

Now, we prove that $s\to\infty$ for any solution $\tg_t$ in the interior of $\Gamma$. 
Assume $\tg_t$ is not ancient. Then one of the variables must go to $0$ or $\infty$ as $t\to \tilde{T}_{\text{min}}$. We will show that in each possible scenario, $s\to\infty$.
If $s\to 0$ then since $y/s<1$, $y\to 0$ as well, but this contradicts $y^3>\frac{1}{s^{4n}}$. If $y\to \infty$, then $y/s<1$ implies $s\to \infty $ as well. If $y\to 0$ then $y^3>\frac{1}{s^{4n}}$ implies $s\to\infty$.  

Next, we look at the derivative of $y/s$ under the backwards flow, which, as before, has the same sign as (\ref{rj-rh}), except now $y-s<0$, since $\tg_t$ is in the interior of $\Gamma$. Hence it has the same sign as \begin{equation}
\label{(y/s)' backwards showing its ancient}
-s-y+2s^{4n}y^{3}(s-(1+n)y)
.\end{equation}

Since $s\to \infty$, we see further, that for fixed $y/s$, and for large enough $s$, (\ref{(y/s)' backwards showing its ancient}) is positive if $y/s<\frac{1}{1+n}$ and negative if $y/s\ge\frac{1}{1+n}$. Hence $y/s$ does not return to the same value infinitely many times. 
But this implies $y/s\to \frac{1}{1+n}$, for if $y/s$ crosses $\frac{1}{1+n}$, then one can argue that $y/s$ is eventually contained in any neighborhood of $\frac{1}{1+n}$. If $y/s>\frac{1}{1+n}$ for all time, then $y/s$ must converge to $\inf_{t\in(\tilde{T}_{\text{min}},\tilde{T}_{\text{max}})}\{(y/s)(t)\}$, and hence by Lemma \ref{possible limits}, must converge to $\frac{1}{1+n}$, and similarly if $y/s<\frac{1}{1+n}$ for all time. Thus $y/s\to\frac{1}{1+n}$ and both $y,s\to\infty$.

From the formula for the scalar curvature (\ref{scalar-curvature-ancient-form}), it follows that $S\to 0$, and, in particular, $S$ is bounded from below, and thus the solution is ancient.




\end{proof}

\begin{prop}
Solutions satisfying $\frac{1}{y^2s^{4n}}<y\le s$ are collapsed. Moreover, if $\frac{1}{y^2s^{4n}}<y<s$ then a rescaling of $\tg_t$ converges in the Gromov-Hausdorff sense to $g^{2}_{\mathbb{CP}^{2n+1}}$ as $t\to -\infty$, and if $y=s$ then a rescaling of $\tg_t$ converges to  $g^{\text{FS}}_{\mathbb{CP}^{2n+1}}$ as $t\to\infty$.
\end{prop}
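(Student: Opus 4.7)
The plan is to exploit the Hopf fibration $S^1 \to S^{4n+3} \to \CCPP^{2n+1}$: since $y=z$ along these solutions, $\tg_t$ is a $\fu(1)$-submersion metric, so the projection $\pi \colon S^{4n+3} \to \CCPP^{2n+1}$ is a Riemannian submersion with totally geodesic $S^1$-fibers and induced base metric $y\langle,\rangle|_{(j)}+y\langle,\rangle|_{(k)}+s\langle,\rangle|_{\mathbb{H}^n}$. Collapse is immediate: Lemma \ref{they are ancient} gives $y, s \to \infty$ as $t \to -\infty$ in both the strict and boundary cases, so $x = 1/(y^2 s^{4n}) \to 0$, and the Hopf fibers, whose length is of order $\sqrt{x}$, shrink to a point.

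To identify the Gromov--Hausdorff limit I would rescale $\tg_t$ by $1/s$. The rescaled metric has $\mathbb{H}^n$-block equal to $\langle,\rangle|_{\mathbb{H}^n}$, fiber coefficient $x/s = 1/(y^2 s^{4n+1}) \to 0$, and $(j),(k)$-coefficients equal to $y/s$. In the interior case $\frac{1}{y^2 s^{4n}}<y<s$, Lemma \ref{they are ancient} yields $y/s \to \tfrac{1}{n+1}$, so the induced base metric converges smoothly to $\tfrac{1}{n+1}(\langle,\rangle|_{(j)}+\langle,\rangle|_{(k)})+\langle,\rangle|_{\mathbb{H}^n}$, a positive multiple of Ziller's second Einstein metric $g^2_{\CCPP^{2n+1}}$. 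In the boundary case $y=s$, preserved by the flow by Lemma \ref{U(1)}, the ratio is identically $1$, so the rescaled base is the Fubini--Study metric $g^{\text{FS}}_{\CCPP^{2n+1}}$ for all $t$.

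The Gromov--Hausdorff conclusion then follows from the standard estimate that if $\pi\colon (M, g) \to (B, h)$ is a Riemannian submersion with totally geodesic fibers of diameter at most $\varepsilon$, then $d_{GH}((M,g),(B,h)) \le \varepsilon$: an $\varepsilon$-net in $B$ lifts to an $\varepsilon$-net in $M$ by choosing one preimage per fiber. Combined with the smooth convergence of the base metrics above, this gives Gromov--Hausdorff convergence of $(S^{4n+3}, \tg_t/s)$ to $g^2_{\CCPP^{2n+1}}$ or $g^{\text{FS}}_{\CCPP^{2n+1}}$ accordingly, as $t\to-\infty$. Essentially all the real work has been done in Lemma \ref{they are ancient}, which pins down the precise limit of $y/s$; the main obstacle here is organizational, namely choosing the rescaling so that the base converges smoothly while the fibers still collapse to a point.
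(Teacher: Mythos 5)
Your Gromov--Hausdorff argument is fine and follows the same route as the paper (rescale so the base converges to the Einstein metric with ratio $y/s\to\frac{1}{1+n}$ or $y/s\equiv 1$, then use the Riemannian submersion with totally geodesic, shrinking $\uU(1)$-fibers); if anything you spell out the submersion-to-GH estimate that the paper leaves implicit. The genuine gap is in the first claim: ``collapse is immediate'' because the fiber length $\sim\sqrt{x}\to 0$ is not a valid deduction. Collapsedness for (ancient) Ricci flow solutions is measured against the curvature scale ($\kappa$-noncollapsing): a shrinking injectivity radius alone proves nothing, since a metric whose curvature blows up like the inverse square of the fiber length (think of a small round sphere, or more generally any situation where $|\Rm|\sim x^{-1}$ near the short fibers) has tiny injectivity radius and is still noncollapsed. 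So from $x\to 0$ you cannot conclude the solution is collapsed without an upper bound on $|\Rm(\tg_t)|$ showing the curvature does \emph{not} concentrate at the fiber scale.

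This is exactly the step the paper supplies and your proposal omits: from the explicit formulas (\ref{ric}) the eigenvalues of the Ricci tensor decay like $O(1/y)$ as $t\to-\infty$, and the Gap theorem for compact homogeneous spaces ($|\Rm|\le C\,|\Ric|$) upgrades this to $|\Rm(\tg_t)|=O(1/y)$. Hence in the curvature-normalized metric $|\Rm(\tg_t)|\,\tg_t$ the totally geodesic Hopf fiber has length of order $\sqrt{x/y}\to 0$, so the injectivity radius at the curvature scale tends to zero and the solution is collapsed (it is $\kappa$-noncollapsed for no $\kappa>0$). With that curvature estimate inserted, the rest of your argument goes through; note also that the limit in the $y=s$ case should be taken as $t\to-\infty$ (as you in fact do), the ``$t\to\infty$'' in the statement being a slip, since the forward flow of the $\uU(2n+2)$-invariant metrics converges to the round metric.
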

\begin{proof}
By Lemma \ref{they are ancient}, such solutions satisfy $y,s\to \infty$, and $\lim_{t\to-\infty}y/s=1$ or $\lim_{t\to-\infty}y/s=\frac{1}{1+n}$. From equations (\ref{ric}), it follows that the eigenvalues of the Ricci tensor decay at a rate of $O(\frac{1}{y})$ as $t\to-\infty$, and hence $|\Ric(\tg_t)|$ decays at a rate of $O(\frac{1}{y})$. By the Gap theorem \cite{bohm2019optimal}, this implies $|\Rm(\tg_t)|$ decays at a rate of $O(\frac{1}{y})$ as $t\to-\infty$ as well. Thus the length of $i$ goes to zero for the curvature normalized solution $|\Rm(\tg_t)|\tg_t$. Since the $\uU(1)$ fibers are totally-geodesic, this implies the injectivity radius tends to zero, and hence $\tg_t$ is collapsed. In the proof of Lemma \ref{they are ancient}, we showed that if $y<s$ then $\frac{y}{s}\to\frac{1}{1+n}$. In particular, for every solution in our $1$-parameter family (besides the one with $y=s$), the metric
on the base tends to the second Einstein metric on $\CCPP^{2n+1}$ (see \cite{ziller1982homogeneous}).

\end{proof}

 \begin{figure}[h]
	\centering{
		\resizebox{90mm}{!}{\includegraphics{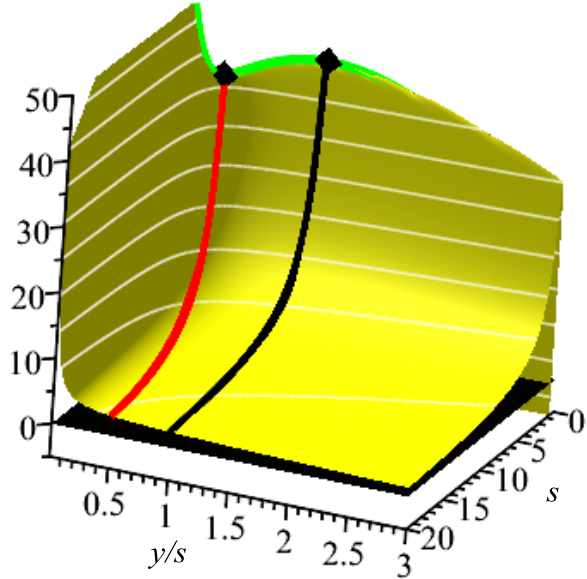}}
		\caption{The graph of $S$ over metrics with $y=z$ when $n=1$ (compare with Figure \ref{fig: ancient4}). The green line represents the $\Sp(n+1)\Sp(1)$-invariant metrics, and the black line represents the line $y=s$, or the $\uU(2n+2)$-invariant metrics. 
		The diamonds represent the round metric, which is a local maximum, and Jensen's second Einstein metric, which is a saddle point. Ancient solutions with $y<s$ asymptotically approach the red line, which represents the stable manifold for Jensen's second Einstein metric. 
\label{Scal1}}}
\end{figure}

\bibliographystyle{alpha}
\bibliography{References}
\end{document}